\newcommand{\nwc}{\newcommand}
\nwc{\aaa}{\mathcal{F}}
\nwc{\aap}{\mathcal{F}_{P}}
\nwc{\al}{\alpha}
\nwc{\C}{\mathbb{C}}
\nwc{\cb}{\overline{C}}
\nwc{\ccc}{\mathfrak{c}}
\nwc{\ch}{\widehat{C}}
\nwc{\cin}{\textbf{(v)}}
\nwc{\cl}{C'}
\nwc{\cp}{\mathcal{C}_{P}}
\nwc{\cpll}{\mathfrak{c}_{P'}}
\nwc{\ct}{\widetilde{C}}
\nwc{\dd}{\mathcal{L}}
\nwc{\ddd}{\mathfrak{d}}
\nwc{\ddl}{\mathcal{L}'}
\nwc{\dlp}{\delta_{P}}
\nwc{\doi}{\textbf{(ii)}}
\nwc{\enq}{$$}
\nwc{\fl}{\flushleft}
\nwc{\fff}{\mathcal{F}}
\nwc{\ffp}{\mathcal{F}_{P}}
\nwc{\ffq}{\mathcal{F}_{Q}}
\nwc{\ffl}{\mathcal{F}'}
\nwc{\G}{\mathcal{G}}
\nwc{\Ga}{\Gamma}
\nwc{\gtl}{\widetilde{g}}
\nwc{\hra}{\hookrightarrow}
\nwc{\hua}{h^{1}(C,\aaa )}
\nwc{\kk}{{\rm K}}
\nwc{\llb}{\mathcal{L}}
\nwc{\mb}{\mathbb}
\nwc{\mc}{\mathcal}
\nwc{\mm}{\mathfrak{m}}
\nwc{\mmp}{\mathfrak{m}_{P}}
\nwc{\mpd}{\mathfrak{m}_{P}^{2}}
\nwc{\nn}{\mathbb{N}}
\nwc{\ob}{\overline{\mathcal{O}}}
\nwc{\obr}{\mathcal{O}^*}
\nwc{\obp}{\overline{\mathcal{O}}_P}
\nwc{\och}{\mathcal{O}_{\hat{C}}}
\nwc{\oh}{\hat{\mathcal{O}}}
\nwc{\ohp}{\hat{\mathcal{O}}_{P}}
\nwc{\ol}{\mathcal{O}'}
\nwc{\oma}{\Omega (\mathfrak{a})}
\nwc{\omo}{\Omega (\mathcal{O})}
\nwc{\oo}{\mathcal{O}}
\nwc{\op}{\mathcal{O}_P}
\nwc{\opc}{\mathcal{O}_{P,C}}
\nwc{\oph}{\hat{\mathcal{O}}_{P}}
\nwc{\opl}{\mathcal{O}_{P}'}
\nwc{\oplc}{\mathcal{O}_{P,C}'}
\nwc{\opll}{\mathcal{O}_{P'}}
\nwc{\opt}{\tilde{\mathcal{O}}_{P}}
\nwc{\optt}{{\mathcal{O}}_{\tilde{P}}}
\nwc{\oq}{\mathcal{O}_{Q}}
\nwc{\oqt}{\tilde{\mathcal{O}}_{Q}}
\nwc{\ot}{\widetilde{\mathcal{O}}}
\nwc{\overop}{\bar{\oo}_{P}}
\nwc{\pb}{\overline{P}}
\nwc{\pbb}{P^*}
\nwc{\pbi}{\overline{P_{i}}}
\nwc{\pbr}{\overline{P_{r}}}
\nwc{\pgmd}{\mathbb{P}^{g+2}}
\nwc{\pgmu}{\mathbb{P}^{g+1}}
\nwc{\ph}{\hat{P}}
\nwc{\pp}{\mathbb{P}}
\nwc{\prv}{\noindent\textbook{Proof}:}
\nwc{\pt}{\widetilde{P}}
\nwc{\ptl}{\tilde{P}}
\nwc{\pum}{\mathbb{P}^{1}}
\nwc{\qh}{\hat{Q}}
\nwc{\qtl}{\tilde{Q}}
\nwc{\qua}{\textbf{(iv)}}
\nwc{\ra}{\rightarrow}
\nwc{\rh}{\hat{R}}
\nwc{\sei}{\textbf{(vi)}}
\nwc{\sep}{\beq\ast\ \ast\ \ast\enq}
\nwc{\sig}{\sigma}
\nwc{\Sig}{\Sigma}
\nwc{\ssp}{S_{P}}
\nwc{\sss}{{\rm S}}
\nwc{\tre}{\textbf{(iii)}}
\nwc{\um}{\textbf{(i)}}
\nwc{\vpb}{v_{\overline{P}}}
\nwc{\vtxp}{\widetilde{V}_{x,P}}
\nwc{\vxp}{V_{x,P}}
\let \wt=\widetilde
\let \La=\Lambda
\nwc{\wh}{\hat{\omega}}
\nwc{\whp}{\hat{\omega}_{P}}
\nwc{\woch}{\omega\cdot\mathcal{O}_{\hat{C}}}
\nwc{\woh}{\omega\cdot\hat{\mathcal{O}}}
\nwc{\ww}{\omega}
\nwc{\wwb}{\omega^*}
\nwc{\wwct}{\omega _{\widetilde{C}}}
\nwc{\wwh}{\widehat{\omega}}
\nwc{\wwhp}{\widehat{\omega}_P}
\nwc{\wwp}{\omega _{P}}
\nwc{\wwt}{\widetilde{\omega}}
\nwc{\wwtp}{\widetilde{\omega}_P}
\nwc{\zz}{\mathbb{Z}}
\newtheorem{coro}{Corollary}[section]
\newtheorem{lemma}[coro]{Lemma}
\newtheorem{thm}[coro]{Theorem}
\newtheorem{conj}[coro]{Conjecture}
\let \fl=\flushleft
\let \fr=\frac
\let \ga=\gamma
\let \be=\beta
\let \al=\alpha
\let \pr=\prime
\let \la=\lambda
\let \ov=\overline
\let \om=\omega
\let \Ga=\Gamma
\let \De=\Delta
\begin{document}

\title{Real inflection points of real linear series on an elliptic curve}
\date{\empty}

\author{Ethan Cotterill}

\address{Instituto de Matem\'atica, UFF, Rua Prof Waldemar de Freitas, S/N,
24.210-201 Niter\'oi RJ, Brazil}

\email{cotterill.ethan@gmail.com}

\author{Cristhian Garay L\'opez}

\address{Departamento de Matem\'aticas, Centro de Investigaci\'on y de Estudios Avanzados del IPN, Apartado Postal 14-740, 07000. Ciudad de M\'exico, M\'exico.}

\email{cgaray@math.cinvestav.mx}

\maketitle

\begin{abstract}
Given a real elliptic  curve $E$ with non-empty real part and $[D]\in \mbox{Pic}^2 E$ its $g_2^1$, we study the real inflection points of distinguished subseries of the complete real linear series $|\mathcal{L}_\mathbb{R}(kD)|$ for $k\geq 3$. We define {\it key polynomials} whose roots index the ($x$-coordinates of) inflection points of the linear series, away from the points where $E$ ramifies over $\mathbb{P}^1$. These fit into a recursive hierarchy, in the same way that division polynomials index torsion points.

Our study is motivated by, and complements, an analysis of how inflectionary loci vary in the degeneration of real {\it hyperelliptic} curves to a metrized complex of curves with elliptic curve components that we carried out in \cite{BCG}.
\end{abstract}

\section{Introduction}
This note is a companion to the papers \cite{Ga} and \cite{BCG}. In \cite{Ga}, the second author described the topology of real inflectionary loci of complete linear series on a real elliptic curve $E$ as a function of the topology of the real locus $E(\mb{R})$, and used this as the inductive basis for a construction of real canonical curves of genus four in $\mb{P}^3$ with many real Weierstrass points. Then in \cite{BCG}, joint with I. Biswas, we used a degeneration borrowed from non-Archimedean geometry to relate two distinct generalizations of real inflectionary loci of complete series on $E$, namely
\begin{enumerate}
\item real inflectionary loci of complete series on real {\it hyper}elliptic curves $X$ of genus $g\geq2$; and
\item real inflectionary loci of {\it in}complete series on real elliptic curves.
\end{enumerate}
Very roughly, generalization (1) may be degenerated to generalization (2); a bit more precisely, the degeneration outputs a limit linear series on a metrized complex of real curves, whose models are smooth elliptic real curves $E_i$, $i=1,\dots,g$.

\medskip
Even more precisely, the complete real series $|\mc{L}_{\mathbb{R}}|$ studied in \cite{BCG} are all multiples of the $g^1_2$ obtained from pulling back a distinguished point $\infty$ on $\mb{P}^1$ via the hyperelliptic structure morphism $\pi: X \ra \mb{P}^1$. With respect to a suitable choice of affine coordinates, we can realize $X$ as (a projective completion of) $y^2=f(x)$, where $\deg(f)=2g+1$ and the map $\pi$ is given by $(x,y) \mapsto x$. Assume that $\mc{L}_{\mathbb{R}}$ is represented by the divisor $kD$ on $X$, where $D$ is the pullback of $\infty$ on $\mb{P}^1$. The interesting cases are associated with choices for which $k>g\geq2$, so hereafter we assume this. There is a distinguished real basis of holomorphic sections for $|\mc{L}_{\mathbb{R}}|= |\mc{L}_{\mathbb{R}}(kD)|$ given by
\begin{equation}\label{distinguished_basis}
\mc{F}_g= \{1,x,\dots,x^k; y, yx, \dots, yx^{k-g-1}\}.
\end{equation}

\medskip
The ramification locus $R_{\pi}$ of $\pi$ is comprised of real points that degenerate to the ramification points of the elliptic components $E_i$, viewed as double covers of $\mb{P}^1$. Furthermore, the $2k-g+1$ sections  $\mc{F}_g$ degenerate to $2k-g+1$ sections that we will also abusively denote by $\mc{F}_g$; these, in turn, determine an inflectionary basis in every point of the ramification locus of $E_i \ra \mb{P}^1$. Hereafter, we assume $i=1$. Abusively, we let $\pi$ denote the double cover $E=E_1 \ra \mb{P}^1$. In \cite{BCG}, we explicitly computed the contribution to (real) inflection made by each point of $R_{\pi}$. Note that in general the inflection arising from $R_{\pi}$ is not simple, which contrasts with the behavior of inflectionary loci of complete series. The behavior of inflectionary loci (including local multiplicities) away from $R_{\pi}$, however, was unclear. Here we will address this mystery behavior through a more careful analysis of the Wronskians whose zero loci select for (the $x$-coordinates of) inflectionary loci. 

\medskip
One upshot of our analysis will be that inflection is in fact simple (i.e. of multiplicity one) away from $R_{\pi}$, at least when the topology of $E(\mb{R})$ is maximally real. One of the key features of the complete series case on which the analysis of \cite{Ga} is predicated is that inflection points of a complete linear series on an elliptic curve correspond to torsion points. Torsion points, in turn, form a sub-lattice of the lattice $\La$ for which $E(\mb{C}) \cong \mb{C}/\La$ as a topological space. Their $x$-coordinates are computed by the {\it division polynomials} described in \cite{S}. By analogy, we will define {\it (generalized) key polynomials} that compute the $x$-coordinates of inflection points of $\mc{F}_g$ on $E$ away from $R_{\pi}$. Our key polynomials are extracted from Wronskians, which also play an important r\^ole in Griffiths and Harris' modern treatment \cite{GH} of Poncelet's theorem on polygons inscribed (and circumscribed) in conics. However, we organize the information encoded by these determinants in an apparently novel way. When $E(\mb{R})$ is maximally real, we can realize $E=E(\la)$ as an elliptic curve in Legendre form, where $\la$ is a real parameter. The inflectionary loci of curves in the associated flat family then determine a plane curve $\mc{C}=\mc{C}(k,g)$, whose singularities control in a rather precise way the (real) inflectionary loci of the curves $E(\la)$. Hopefully we will manage to convince the reader that the further study of key polynomials and curves is interesting in its own right.

\medskip
The authors would like to acknowledge the valuable help of Eduardo Ruiz-Duarte.

\section{Wronskians, revisited}
Borrowing notationally from \cite{BCG}, assume that $E$ is defined by an affine equation
$y^2= f$, where $f\in\mb{R}[x]$ is a separable cubic.
The restriction of the real inflectionary locus  on $E$ to the affine chart $U_y$ where the coordinate $y$ is nonvanishing is the zero locus of (the restriction of) a regular function $\al$ equal to a Wronskian determinant of partial derivatives of sections of $\mc{F}$, namely
\[
\al|_{U_y}= \det(f_i^{(j)})_{0 \leq i,j \leq 2k-g}.
\]
Here $f_i$ refers to the $i$-th element of the distinguished basis \eqref{distinguished_basis} of $\mc{F}_g$, and the superscript denotes the (order of) differentiation with respect to $x$.

\medskip
Now, for any given $n \geq 1$, define $P_n(x) \in \mb{R}[x]$ by
\begin{equation}\label{P_n}
y^{(n)}= f^{-n}y P_n(x).
\end{equation}

\medskip
Note that $\al|_{U_y}=y^{(n)}$ precisely when $n=k+1\geq3$ and $g=k-1$. It follows in this situation that the roots $x=\ga$ of $P_n(x)$ for which $f(\ga)>0$ are the $x$-coordinates of the inflection points of the codimension-$(g-1)$ subseries of $|\mc{L}_{\mathbb{R}}|$ spanned by $\mc{F}_g$
away from the ramification locus $R_{\pi}$. 
(Note that the positivity of $f$ is required in order for the root of a generalized key polynomial to lift to an inflection point.) 

\medskip
Accordingly, here we will focus on the calculation of the {\it key polynomials} $P_n(x)$, which may be done recursively. Differentiating \eqref{P_n} yields
\[
\begin{split}
y^{(n+1)}&= \frac{d P_n}{dx} \cdot f^{-n}y+ P_n(-nf^{-(n+1)}f^{\pr}y+ f^{-n}y^{\pr}) \\
&= \frac{d P_n}{dx} \cdot f^{-n}y+ P_n(-nf^{-(n+1)}f^{\pr}y+ f^{-n} \cdot \frac{1}{2} f^{\pr}f^{-1}y) \\
&= f^{-(n+1)}y \cdot \bigg(\frac{d P_n}{dx} \cdot f+ P_nf^{\pr}(-n+ 1/2) \bigg)
\end{split}
\]
from which we deduce that
\begin{equation}\label{P_n_recursion}
P_{n+1}= \frac{d P_n}{dx} \cdot f+ (-n+ 1/2) P_n \cdot \frac{df}{dx}.
\end{equation}
The key polynomials $P_n$, $n \geq 1$, are determined by the recursion \eqref{P_n_recursion} together with the seed datum
\begin{equation}\label{seed_datum}
P_1= \frac{1}{2} \frac{df}{dx}.
\end{equation}

Empirically we observe the following phenomenon.

\begin{conj} \label{separability_conjecture} When the polynomial $f$ has three distinct real roots, the $n$th key polynomial $P_n$ is separable, for all $n \geq 1$.
\end{conj}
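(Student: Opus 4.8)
\emph{Reduction.} The plan is to reduce the conjecture to a coprimality statement for consecutive key polynomials, and then attack that through the discriminant of $P_n$ together with the reality of $f$. One checks first that $\gcd(P_n,f)=1$ for every $n$, with no reality assumption. If $f(\gamma)=0$ then, $f$ being separable, $\gamma$ is a simple root and $f'(\gamma)\neq0$; evaluating \eqref{P_n_recursion} at $\gamma$ gives $P_{m+1}(\gamma)=(\frac12-m)f'(\gamma)P_m(\gamma)$ for all $m\ge1$, so with \eqref{seed_datum} we obtain $P_n(\gamma)=\bigl(\prod_{j=0}^{n-1}(\frac12-j)\bigr)f'(\gamma)^n\neq0$. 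Hence every root $\gamma$ of $P_n$ satisfies $f(\gamma)\neq0$, and at such a $\gamma$ the recursion \eqref{P_n_recursion} collapses to $P_{n+1}(\gamma)=f(\gamma)P_n'(\gamma)$; so $\gamma$ is a repeated root of $P_n$ if and only if $P_{n+1}(\gamma)=0$. Thus Conjecture~\ref{separability_conjecture} is equivalent to the assertion that $\gcd(P_n,P_{n+1})=1$ whenever $f$ has three distinct real roots. (It is tempting to imitate the standard proof that the division polynomials $\psi_n$ are separable — that $E[n]$ is \'etale when $n$ is prime to the characteristic — but that argument does not transpose directly, precisely because the inflectionary scheme cut out by $\mathcal{F}_g$ is non-reduced along $R_\pi$.)

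\emph{The discriminant.} An affine change of $x$ and a rescaling of $f$ replace $P_n$ by a scalar multiple of a translate of itself, so we may take $f=x^3+px+q$ with roots $e_1,e_2,e_3$. Then \eqref{P_n_recursion} shows $P_n$ is weighted-homogeneous of weight $4n$ for the grading $\deg x=2,\ \deg p=4,\ \deg q=6$, with leading coefficient a nonzero constant; hence $D_n:=\operatorname{disc}(P_n)$ is weighted-homogeneous of weight $4n(2n-1)$, and since $\operatorname{disc}(f)=-4p^3-27q^2$ is irreducible of weight $12$ we may write $D_n=c_n\,\operatorname{disc}(f)^{a_n}\,G_n(p,q)$ with $G_n$ prime to $\operatorname{disc}(f)$. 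The locus $\mathcal{U}=\{(p,q):\operatorname{disc}(f)>0\}$ of depressed cubics with three distinct real roots is connected, and by the Reduction the conjecture says exactly that $G_n$ has no zero on $\mathcal{U}$. For $n=1$ this is immediate: $P_1=\frac12 f'$, so $D_1$ is a nonzero multiple of $-p=\frac12(e_1^2+e_2^2+e_3^2)$, which is strictly positive once the $e_i$ are real. For $n=2$, $P_2=\frac14\psi_3$ with $\psi_3=3x^4+6px^2+12qx-p^2$ the third division polynomial; since the $3$-torsion of a smooth curve is \'etale, $\psi_3$ is separable whenever $\operatorname{disc}(f)\neq0$, which (as $G_2$ is prime to $\operatorname{disc}(f)$) forces $G_2$ to be a nonzero constant and $a_2=2$. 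The structural hope for general $n$ is that $G_n$ factors into pieces each of which, like $-p$, is a symmetric function of $e_1,e_2,e_3$ that is manifestly positive when the $e_i$ are real, so that the reality hypothesis keeps $G_n$ of one strict sign on $\mathcal{U}$. I expect the principal obstacle to be exactly the absence of any closed form for $G_n$ — equivalently, of a description of the (necessarily non-real) repeated roots $P_n$ could a priori acquire — for general $n$; producing one, or a moduli interpretation of the common zeros of $P_n$ and $P_{n+1}$ on $E$ extending the torsion picture, is what the argument still lacks.

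\emph{A real-analytic complement, and the crux.} In parallel I would control directly the roots $\gamma$ of $P_n$ with $f(\gamma)>0$, i.e. the honest real inflection points of the base-point-free series spanned by $\{1,x,\dots,x^{n-1},y\}$, whose Wronskian in $x$ is, up to a unit, $y^{(n)}=f^{-n}yP_n$. Over each of the two ovals of $E(\mathbb{R})$ and in the chart $U_y$, $y^{(n)}$ is a smooth function of $x$ whose zeros are exactly the roots of $P_n$ in the relevant interval, so Rolle's theorem places a zero of $P_{n+1}$ strictly between consecutive zeros of $P_n$ there; combining this with the Pl\"ucker count of the total inflection of the series (degree $2(n^2-1)$, of which a part of degree $4n$ lies away from $R_\pi$, for $n\ge3$) and with the signs of $P_n$ at the $e_i$ should pin down the real roots, show that they are simple, and show that they interlace those of $P_{n\pm1}$. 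What this does not reach are the roots of $P_n$ with $f(\gamma)\le0$, in particular the genuinely complex ones, which the topology of $E(\mathbb{R})$ does not govern; bridging that gap — via the discriminant factorization above, a semicontinuity or monodromy argument as $f$ varies within the connected family $\mathcal{U}$, or the extra involution available when $f$ is an odd cubic — is the crux of the conjecture.
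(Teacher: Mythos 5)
First, a point of order: the statement you are addressing is stated in the paper as a \emph{conjecture}, and the paper offers no proof of it --- only a computational verification for $n\le 9$ (via the $x$-discriminant of $P_n$ for the Legendre family \eqref{real_legendre_family}), together with the observation that the analogous statement already fails at $n=4$ for the family \eqref{non_legendre_family}. So there is no paper proof to compare against, and your proposal must be judged on its own terms. On those terms, your preliminary reductions are correct and genuinely useful, and go beyond anything recorded in the paper. The evaluation $P_n(\gamma)=\bigl(\prod_{j=0}^{n-1}(\tfrac12-j)\bigr)f'(\gamma)^n\ne 0$ at a root $\gamma$ of $f$ is right (each factor $\tfrac12-j$ is a nonzero half-integer), so $\gcd(P_n,f)=1$; consequently \eqref{P_n_recursion} collapses to $P_{n+1}(\gamma)=f(\gamma)P_n'(\gamma)$ at any root $\gamma$ of $P_n$, and separability of $P_n$ is indeed equivalent to $\gcd(P_n,P_{n+1})=1$. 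The identification $P_2=\tfrac14\psi_3$ checks out and is a nice bridge to the torsion picture, and the weighted-homogeneity bookkeeping is consistent ($P_n$ has weight $4n$ and nonzero constant leading $x$-coefficient, so $\operatorname{disc}_x(P_n)$ has weight $4n(2n-1)$).

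But the proposal is not a proof, and you say so yourself. The gap is exactly where you locate it: nothing in the argument excludes a repeated root $\gamma$ of $P_n$ that is non-real, or real with $f(\gamma)\le 0$. The Rolle/interlacing argument and the Pl\"ucker count control only the real roots lying over the intervals where $f>0$, i.e.\ the honest real inflection points; the factorization $D_n=c_n\operatorname{disc}(f)^{a_n}G_n$ reduces the conjecture to non-vanishing of $G_n$ on the connected region $\{\operatorname{disc}(f)>0\}$, but supplies no mechanism for proving that non-vanishing beyond $n=1,2$ (where you exhibit $G_1\sim -p$ and $G_2=\mathrm{const}$, the latter via \'etaleness of $3$-torsion). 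The paper's own example --- the discriminant of $P_4$ for \eqref{non_legendre_family} acquiring a real root at $\lambda=\pm 1/\sqrt{3}$ --- shows that whatever closes the gap must use the reality of \emph{all three} roots of $f$ in an essential way, and no such input appears in your argument after the $n=2$ case. Two smaller points: (i) the factorization of $D_n$ tacitly assumes $D_n\not\equiv 0$, i.e.\ that $P_n$ is separable for \emph{generic} $f$; this is known computationally for $n\le 9$ but is not established in general by anything you write; (ii) the hoped-for structure that $G_n$ factors into symmetric functions of $e_1,e_2,e_3$ that are manifestly positive when the $e_i$ are real is pure speculation, supported only by the $n=1$ case. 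In short: correct and worthwhile reductions, but the conjecture remains open at the end of your argument, just as it does in the paper.
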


\medskip 
Indeed, the assumption that $f$ has three distinct real roots means that the topology of $E(\mb{R})$ is maximally real, and that $E$ is isomorphic as a real curve to one of the Legendre form 
\begin{equation}\label{real_legendre_family}
y^2=x(x-1)(x-\la)
\end{equation}
where $\la$ is a real parameter, $\la\neq0,1$. We have checked that Conjecture~\ref{separability_conjecture} holds whenever $n \leq 9$ by explicitly computing the $x$-discriminant of $P_n$ for the elliptic curve \eqref{real_legendre_family} and checking that the only real roots are 0 and 1 (which appear with large multiplicities). Note that the discriminant of $P_9$ is a polynomial in $\la$ of degree 218.

\medskip
On the other hand, when $f$ has two conjugate roots, $P_n$ is no longer separable in general, whenever $n \geq 4$. For example, one checks that the discriminant of $P_4$ of the curve with normal form
\begin{equation}\label{non_legendre_family}
y^2=x(x-(1+\la i))(x-(1-\la i))
\end{equation}
has a nontrivial real root when $\la= \pm \frac{1}{\sqrt{3}}$. So Conjecture~\ref{separability_conjecture}, assuming it holds, strongly depends on the maximal reality of $E(\mb{R})$.

\medskip
Curiously, it also appears that the {\it degree} of the $x$-discriminant of $P_n$ depends upon the topological type of $E(\mb{R})$.

\begin{conj}\label{discriminant_degree_conjecture}
Let $f=f(x,\la)$ be a cubic polynomial associated to a one-parameter family of elliptic curves $y^2=f$ as in \eqref{real_legendre_family} or \eqref{non_legendre_family}. Define the associated key polynomials $P_n=P_n(x,\la)$ according to the recursive hierarchy \eqref{P_n_recursion} together with the seed \eqref{seed_datum}. When $f$ is of Legendre type \eqref{real_legendre_family} (resp., of type \eqref{non_legendre_family}), the degree of the $x$-discriminant of $P_n$ is equal to $3n^2-3n+2$ (resp., $4n^2-2n$).
\end{conj}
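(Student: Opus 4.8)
The plan is to trade the degree computation for a purely local one. Write $\Delta_n(\lambda)=\operatorname{disc}_x P_n(x;\lambda)$; in case \eqref{non_legendre_family} the polynomial $f$ depends on $\lambda$ only through $\mu:=\lambda^2$, so $\Delta_n(\lambda)=\widehat\Delta_n(\lambda^2)$ for a polynomial $\widehat\Delta_n$ in $\mu$. From the recursion \eqref{P_n_recursion}--\eqref{seed_datum} one first extracts the coarse shape of $P_n$: an induction on leading coefficients gives $\deg_x P_n=2n$ with leading coefficient the nonzero constant $\tfrac32\prod_{j=1}^{n-1}(\tfrac32-j)$, while tracking the top-degree-in-$\lambda$ (resp.\ top-degree-in-$\mu$) part of $P_n$ --- which obeys \eqref{P_n_recursion}--\eqref{seed_datum} with $f$ replaced by its top-degree part in $\lambda$ (resp.\ $\mu$), hence is a \emph{degenerate key polynomial} that one checks is never identically zero --- gives $\deg_\lambda P_n=n$ for \eqref{real_legendre_family} and $\deg_\mu P_n=n$ for \eqref{non_legendre_family}, again with constant leading coefficient. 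Hence the closure $\mathcal C_n$ of $\{P_n=0\}$ in $\mathbb P^1\times\mathbb P^1$ has bidegree $(n,2n)$, arithmetic genus $(n-1)(2n-1)$, and the projection $\phi$ to the parameter line has degree $2n$ with no vertical fibre components. Applying Riemann--Hurwitz to the normalization $\widetilde{\mathcal C}_n\to\mathcal C_n$ and invoking the classical formula $\operatorname{ord}_{\lambda=\lambda_0}\Delta_n=\sum_{p\mapsto\lambda_0}\bigl(2\delta_p+\sum_i(e_i(p)-1)\bigr)$, the contributions of the local $\delta$-invariants over finite $\lambda$ cancel and one is left with
\[
\deg_\lambda\Delta_n=4n^2-2n-\mathfrak{d}_\infty,\qquad \deg_\mu\widehat\Delta_n=4n^2-2n-\mathfrak{d}^{(\mu)}_\infty,
\]
where $\mathfrak{d}_\infty$ (resp.\ $\mathfrak{d}^{(\mu)}_\infty$) is the order of the discriminant divisor at the fibre over $\lambda=\infty$ (resp.\ $\mu=\infty$). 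Since $\deg_\lambda\Delta_n=2\deg_\mu\widehat\Delta_n$ in case \eqref{non_legendre_family}, Conjecture~\ref{discriminant_degree_conjecture} becomes the pair of evaluations $\mathfrak{d}_\infty=n^2+n-2$ and $\mathfrak{d}^{(\mu)}_\infty=n(2n-1)$.

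For the Legendre family I would first remove the fibre at infinity from the picture using the isomorphism $E(\lambda)\xrightarrow{\sim}E(1/\lambda)$, $(x,y)\mapsto(x/\lambda,\,y/\lambda^{3/2})$: it propagates through \eqref{P_n_recursion} to the functional equation $P_n(x;\lambda)=\lambda^{2n}P_n(x/\lambda;1/\lambda)$, whence $\Delta_n(\lambda)=\lambda^{4n^2-2n}\Delta_n(1/\lambda)$ and $\mathfrak{d}_\infty=\operatorname{ord}_{\lambda=0}\Delta_n$. I thus study the degeneration to the nodal cubic $y^2=x^2(x-1)$. A short induction through \eqref{P_n_recursion}, carried one step beyond the resonance at $x$-multiplicity $2n-1$ (which, one checks, always occurs), yields the closed form $P_n(x;0)=c_n\,x^{2n-1}\!\left(x-\tfrac{2n}{3}\right)$; so the only multiple root of the fibre over $\lambda=0$ is the one of multiplicity $2n-1$ at $x=0$, and $\operatorname{ord}_{\lambda=0}\Delta_n=2\sum_{i<j}v_\lambda(r_i-r_j)$ summed over the cluster $r_1,\dots,r_{2n-1}$ of roots of $P_n(\cdot;\lambda)$ that tend to $0$. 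This last sum is a lattice-point count on the lower part of the Newton polygon of $P_n$ in the $(x\text{-exponent},\lambda\text{-exponent})$-plane between the vertices $(0,n)$ and $(2n-1,0)$, whose vertices (the valuations $v_\lambda(a_i(P_n))$ of the coefficients) one computes from the expansion $y=y_0(1-\lambda/x)^{1/2}$ with $y_0=f_0^{1/2}$, $f_0=x^2(x-1)$, using $y_0^{(m)}=f_0^{-m}y_0 P^{(0)}_m$ and the closed form for $P^{(0)}_m=P_m(\cdot\,;0)$ just obtained; the resulting count gives $n^2+n-2$ (e.g.\ $0,4,10$ for $n=1,2,3$). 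For \eqref{non_legendre_family} the same method applies verbatim at $\mu=\infty$, equivalently (after $x\mapsto x+1$) at the node of the limit cubic $y^2=x(x-1)^2$; the decisive difference is that the top-$\mu$ part of $P_n$ is a nonzero \emph{constant}, so that all $2n$ roots escape to $x=\infty$ as $\mu\to\infty$, whereas in the Legendre case $n-2$ of them converge to the (distinct) roots of the degenerate key polynomial attached to $y^2=-x(x-1)$ --- and this is precisely the discrepancy that turns $n^2+n-2$ into $n(2n-1)$.

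The step I expect to be the real obstacle is the non-degeneracy of the edge polynomials along the relevant stretch of the Newton polygon --- equivalently, the assertion that no two colliding roots agree to higher $\lambda$-order (resp.\ $\mu$-order) than the polygon predicts, so that each within-edge difference $v_\lambda(r_i-r_j)$ equals the edge slope. This is the boundary analogue of the separability Conjecture~\ref{separability_conjecture}, and, as the failure of separability already at $n=4$ for family \eqref{non_legendre_family} shows, it is genuinely sensitive to the arithmetic of the family; even the coarser task of determining the polygon's vertices for all $n$ is complicated by sporadic vanishings of intermediate coefficients (for instance $a_1(P_2)=a_3(P_3)=0$ in the Legendre family). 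A promising line of attack is to recognize the edge polynomials themselves as degenerate key polynomials of the rational curves that arise along the boundary (the nodal cubic and its cuspidal and multiplicative-type degenerations at the corners of the polygon) and to run an induction on them directly, in parallel with the one producing $P_n(x;0)=c_n x^{2n-1}(x-\tfrac{2n}{3})$: for the extreme edge the relevant object is literally the key polynomial of the conic $y^2=-x(x-1)$, which becomes, under $x=\tfrac12(1+\sin\theta)$ and $y=\tfrac12\cos\theta$, an explicit Chebyshev-type polynomial in $2x-1$ with simple roots.
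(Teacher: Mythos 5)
First, a point of reference: the paper does not prove this statement. It is stated as Conjecture~\ref{discriminant_degree_conjecture} on the strength of explicit computations for small $n$ only, so there is no proof of record to compare yours against; your proposal has to stand on its own.

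Much of your reduction is correct and verifiable. The degree statements $\deg_x P_n=2n$, $\deg_\la P_n=n$ (Legendre) and $\deg_\mu P_n=n$ with constant top-$\mu$ coefficient (family \eqref{non_legendre_family}) do follow from \eqref{P_n_recursion}--\eqref{seed_datum} as you describe, and the paper's Lemma~\ref{key_poly_degree} proves the Legendre half. Your functional equation $P_n(x;\la)=\la^{2n}P_n(x/\la;1/\la)$ propagates correctly through \eqref{P_n_recursion} from $f(x,\la)=\la^3 f(x/\la,1/\la)$, giving $\Delta_n(\la)=\la^{4n^2-2n}\Delta_n(1/\la)$ and hence, rigorously and without any appeal to Riemann--Hurwitz, $\deg_\la\Delta_n=4n^2-2n-\operatorname{ord}_{\la=0}\Delta_n$. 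The closed form $P_n(x;0)=c_n x^{2n-1}(x-\tfrac{2n}{3})$ checks out by induction (the $x^{2n}$-coefficient of the recursion output vanishes identically, as you need), as does the vertex $(0,n)$ of the Newton polygon, via the reflection $b_k=b_{2n-k}$ forced by the functional equation at $x=0$. The numerics also match: for $n=2$ the polygon is a single segment from $(0,2)$ to $(3,0)$ giving $\operatorname{ord}_0\Delta_2=4$, while for $n=3$ the identical vanishing of the $x^3$-coefficient produces a broken polygon $(0,3)\to(1,2)\to(5,0)$ and the count $2\cdot(2+3)=10$; both agree with $n^2+n-2$.

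The genuine gap is exactly the one you flag, and it is not a small one: the value $n^2+n-2$ is \emph{not} a consequence of the data you have actually established. It depends on the precise break pattern of the lower Newton polygon between $(0,n)$ and $(2n-1,0)$ --- already at $n=3$ the polygon is not a single edge, because of the sporadic vanishing $a_3(P_3)=0$, and a single-edge polygon would give $2n^2-2n$ rather than $n^2+n-2$ --- and on the separability of every edge polynomial, without which the pairwise valuations $v_\la(r_i-r_j)$ exceed the slopes and the count only gives a lower bound for $\operatorname{ord}_{\la=0}\Delta_n$. Neither the polygon's vertex set for general $n$ nor edge-separability is proved in your write-up, and the latter is a boundary version of the paper's own open Conjecture~\ref{separability_conjecture} (and is sensitive to the family, as the $n=4$ failure of separability for \eqref{non_legendre_family} shows). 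So what you have is a correct and useful reduction of the conjecture to a concrete combinatorial-arithmetic statement about the $\la$-adic Newton polygons of the $P_n$, together with verification in low degree --- a genuine advance on the paper's purely empirical evidence --- but not a proof. If you pursue this, the closed recursion for the edge polynomials (your ``degenerate key polynomials'' of the boundary rational curves) is the right place to concentrate effort, since it is the only route visible here to controlling the intermediate coefficients for all $n$ at once.
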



\medskip 
Hereafter, we focus our attention on elliptic curves $E=E(\la)$ arising from the real Legendre family \eqref{real_legendre_family}. We will be concerned with {\it generalized key polynomials} whose roots encode the $x$-coordinates of inflection points of $\mc{F}_g$ away from the ramification locus $R_{\pi}$ of $\pi: E \ra \mb{P}^1$, or equivalently, inflection points whose $x$-coordinates are not roots of $f$. By substituting $y^{(m)}= f^{-m}y P_m$ in the $n=(k+1)$-th Wronskian determinant that defines $\al|_{U_y}$, we can write $\al|_{U_y}=(f^{-n}y)^\mu P_{\mu,n}$, where $P_{\mu,n}$ is now the generalized key polynomial associated with $\mc{F}_g$ when $k-g=\mu\geq1$. This  $P_{\mu,n}$ is a polynomial of degree $\mu$ in the key polynomials $P_m=P_{1,m}$. We now make this explicit in the cases $\mu=2,3$.

\medskip 
{\bf Case: $\mu=2$.} Here
\begin{equation}\label{al_mu=2}
\al|_{U_y}= \det \left( \begin{array}{cc}
y^{(n)} & y^{(n+1)} \\
(xy)^{(n)} & (xy)^{(n+1)}
\end{array} \right)=(f^{-n}y)^2P_{2,n};
\end{equation}
applying the facts that
\begin{equation}\label{derivative_identities}
y^{(m)}= f^{-m}y P_m \text{ and } (xy)^{(m)}= my^{(m-1)}+ xy^{(m)}
\end{equation}
for all $m \geq 1$ in \eqref{al_mu=2} yields
\[
\al|_{U_y}=(n+1)(y^{(n)})^2- ny^{(n+1)}y^{(n-1)}
\]
and
\begin{equation}\label{P_2,n}
P_{2,n}= (n+1)P_n^2- nP_{n-1}P_{n+1}
\end{equation}
for all $n \geq 1$.

\medskip
{\bf Case: $\mu=3$.} This time, we have
\begin{equation}\label{al_mu=3}
\al|_{U_y}= \det \left( \begin{array}{ccc}
y^{(n)} & y^{(n+1)} & y^{(n+2)} \\
(xy)^{(n)} & (xy)^{(n+1)} & (xy)^{(n+2)} \\
(x^2y)^{(n)} & (x^2y)^{(n+1)} & (x^2y)^{(n+2)}
\end{array} \right)=(f^{-n}y)^3P_{3,n}.
\end{equation}
Evaluating the Wronskian determinant \eqref{al_mu=3} and applying the derivative identities \eqref{derivative_identities} together with the fact that
\begin{equation}\label{derivative_identity_bis}
(x^2y)^{(m)}= m(m-1)y^{(m-2)}+ 2mxy^{(m-1)}+ x^2y^{(m)}
\end{equation}
whenever $m \geq 2$ yields
{\small
\begin{equation}\label{P_3,n}
\begin{split}
P_{3,n}&= (n+1)^2 (n+2) P_n^3 - n (n+1)P_n (2 (n+2) P_{n-1} P_{n+1}
+ (n-1) P_{n-2} P_{n+2}) \\
&+  n ((n^2+n-2) P_{n-2} P_{n+1}^2 + n (n+1) P_{n-1}^2 P_{n+2})
\end{split}
\end{equation}
}
for all $n \geq 2$.

\section{Real roots of generalized key polynomials}
In his Ph.D. thesis \cite[Prop. 3.2.5]{Ga}, the second author obtained a complete characterization of the real inflectionary loci of complete real linear series $|\mc{L}_\mathbb{R}|$ of degree $d\geq2$ on a real elliptic curve $E$. He showed, in particular, that when the real locus $E(\mb{R})$ has two real connected components, the number of real inflection points of $|\mc{L}_\mathbb{R}|$ is either $d$ or $2d$, depending upon whether $d$ is odd or even. In the present setting, $d=2k$ is always even, and therefore $|\mc{L}_\mathbb{R}|$ has precisely $4k$ real inflection points.

\medskip
On the other hand, the linear series spanned by $\mc{F}_g$ is complete precisely when $n=\mu+2$, i.e. when $k=\mu+1$. In that situation, there are $4\mu+4$ real inflection points, of which $4\mu$ lie away from $R_{\pi}$. It follows that the corresponding generalized key polynomial $P_{\mu,\mu+2}=P_{\mu,\mu+2}(\la)$ associated with any choice of real parameter $\la$ has precisely $2\mu$ real roots $x=\gamma$ for which $f(\gamma,\la) > 0$.  

\medskip
Surprisingly, it seems that the generalized key polynomial $P_{\mu,\mu+2}$ determines the real inflectionary behavior of {\it every} generalized key polynomial $P_{\mu,n}=P_{\mu,n}(\la)$ whenever $n \geq \mu+2$, and irrespective of the choice of $\la$. Namely, graphing the zero loci $(P_{\mu,n}(x,\la)=0)$ when $\mu=1,2,3$ for low values of $n \geq \mu+2$, we are led to speculate the following.

\begin{conj}\label{numerology_of_key_polynomial_roots}
Let $\mu \geq 1$ and $n \geq \mu+2$ be nonnegative integers. 
For every fixed value of $\la\neq0,1$, the corresponding generalized key polynomial $P_{\mu,n}(\la)$ has precisely either $\mu$ or $2\mu$ real roots $x=\gamma$ such that $f(\gamma,\la) > 0$, depending upon whether $n-\mu$ is odd or even.
\end{conj}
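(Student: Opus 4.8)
\emph{Proof strategy.} The plan is to recast $P_{\mu,n}$ as an inflectionary Wronskian, to extract the parity of its real root count from a boundary analysis, and then to attack the matching upper bound. Set $k=n-1$ and $g=k-\mu$, so that $\mathcal{F}_g=\{1,x,\dots,x^{n-1};\,y,xy,\dots,x^{\mu-1}y\}$ and $n-\mu=g+1$. The classical reduction-of-order identity
\[
W\big(1,x,\dots,x^{n-1},h_1,\dots,h_\mu\big)=\Big(\prod_{i=0}^{n-1}i!\Big)\,W\big(h_1^{(n)},\dots,h_\mu^{(n)}\big),
\]
proved by block-triangularizing the Wronskian matrix, shows that $\al|_{U_y}$ agrees up to a nonzero constant with the $\mu\times\mu$ determinant $W\big(y^{(n)},(xy)^{(n)},\dots,(x^{\mu-1}y)^{(n)}\big)$ (as displayed above for $\mu=2,3$, now for all $\mu$), and hence that on $\{f>0\}$ the zero locus of $P_{\mu,n}$ coincides with that of $W\big(u^{(n)},(xu)^{(n)},\dots,(x^{\mu-1}u)^{(n)}\big)$, where $u:=\sqrt{|f|}$. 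By construction of the generalized key polynomials, the real roots $\gamma$ of $P_{\mu,n}$ with $f(\gamma)>0$ are precisely the $x$-coordinates of the real inflection points of $\langle\mathcal{F}_g\rangle\subset|\mathcal{L}_\mathbb{R}(kD)|$ away from $R_\pi$; since $\langle\mathcal{F}_g\rangle$ is stable under the hyperelliptic involution $\iota:(x,y)\mapsto(x,-y)$, each such root lifts to an $\iota$-orbit of two real inflection points sharing that $x$-coordinate. Thus the conjecture is equivalent to asserting that $\langle\mathcal{F}_g\rangle$ has exactly $4\mu$ real inflection points off $R_\pi$ when $g$ is odd, and $2\mu$ when $g$ is even, for every $E(\la)$ of the family \eqref{real_legendre_family}; and the case $n=\mu+2$ ($g=1$, $\langle\mathcal{F}_g\rangle$ complete) is the base case noted above, via \cite[Prop.~3.2.5]{Ga}.

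\emph{Parity and a lower bound.} Let $r_1<r_2<r_3$ be the roots of $f$; the function $u=\sqrt{|f|}$ is real-analytic on each of the (at most four) intervals of $\mathbb{R}\setminus\{r_1,r_2,r_3\}$. From the leading asymptotics of the entries $(x^iu)^{(n+p)}$ at each $r_j$ (where $u\sim c_j|x-r_j|^{1/2}$ forces a blow-up of order $|x-r_j|^{1/2-n-p}$) and as $x\to\pm\infty$ (where $u\sim|x|^{3/2}$) one reads off the sign of the Wronskian at every endpoint, hence the parity of its zero count on each interval; summing over the two intervals comprising $\{f>0\}$ pins down the parity of $N:=\#\{\gamma:P_{\mu,n}(\gamma)=0,\ f(\gamma)>0\}$ together with a lower bound. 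I expect this to return exactly $N\equiv\mu\ (\mathrm{mod}\ 2)$ with $N\ge\mu$ when $n-\mu$ is odd, and $N$ even with $N\ge 2\mu$ when $n-\mu$ is even -- matching the conjectured values. For $\mu=1$ the computation is elementary and is clarified by the generating identity
\[
\sum_{n\ge 0}\frac{P_n(x)}{n!\,f(x)^n}\,t^n=\sqrt{\frac{f(x+t)}{f(x)}}=\prod_{j=1}^3\Big(1+\frac{t}{x-r_j}\Big)^{1/2},
\]
which yields a closed form for $P_n$; for $\mu>1$ the leading terms of the $\mu\times\mu$ Wronskian at each $r_j$ are proportional and cancel, so one must expand $(x^iu)^{(n)}$ to order $\mu$ in $x-r_j$ and evaluate a confluent-Vandermonde-type determinant -- a more delicate but still finite computation.

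\emph{The matching upper bound.} This is the heart of the matter. The Plücker formula gives total inflectionary weight $(2k-g+1)(2k)$ for $\langle\mathcal{F}_g\rangle$; subtracting the contribution of $R_\pi$ computed in \cite{BCG} (essentially independent of $\la$) bounds only the inflection away from $R_\pi$ counted over $\mathbb{C}$ with multiplicity, which is of order $n^2$ and so does not constrain $N$ from above. Instead I would induct on $n$ with $\mu$ fixed. Writing $\phi_i:=(x^iy)^{(n)}$, the identity $W\big(\phi_0',\dots,\phi_{\mu-1}'\big)=W\big(1,\phi_0,\dots,\phi_{\mu-1}\big)$ shows that $n\mapsto n+1$ adjoins the constant function to the Wronskian system, and iterating from the base case exhibits $P_{\mu,n}$ -- up to a factor that is positive on $\{f>0\}$ -- as $W\big(1,x,\dots,x^{n-\mu-3},\psi_0,\dots,\psi_{\mu-1}\big)$, $\psi_i=(x^iy)^{(\mu+2)}$ (the monomial list empty for $n=\mu+2$). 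One then needs interval-by-interval control forcing $N$ to toggle between $\mu$ and $2\mu$: a generalized Rolle argument iterating the recursion \eqref{P_n_recursion}, or a bound on the number of real zeros of such Wronskians in the spirit of Chebyshev/disconjugacy theory, combined with the endpoint signs above. A complementary route is to show $N(\la)$ is locally constant on each component of $\{\la\ne 0,1\}$: it can change only when a real root of $P_{\mu,n}$ reaches $\{f=0\}$ -- impossible, since \eqref{P_n_recursion} gives $P_m(r_j)\neq 0$ for all $m$ (as $P_1(r_j)=\tfrac12 f'(r_j)\neq 0$ and $P_{m+1}(r_j)=(\tfrac12-m)f'(r_j)P_m(r_j)$), whence $P_{\mu,n}(r_j,\la)\neq 0$ by the displayed formulas for $P_{2,n},P_{3,n}$ and their analogues -- or when two real roots collide, which is excluded by the natural strengthening of Conjecture~\ref{separability_conjecture} to $P_{\mu,n}$; one could then read off $N$ from a single convenient specialization.

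\emph{Main obstacle.} The genuine difficulty is this exactness. Boundary data and the Plücker count bound $N$ only from below and -- when $\mu$ is odd -- fix its parity; for $\mu$ even, $\mu$ and $2\mu$ are not separated by parity, so spurious extra pairs of real roots in $\{f>0\}$ must be ruled out directly. This is presumably why the statement is recorded only as a conjecture; a complete proof would most plausibly combine the adjoin-a-section recursion with a sharp bound on the number of real zeros of $W\big(u^{(n)},\dots,(x^{\mu-1}u)^{(n)}\big)$ on an interval, or else follow from a resolution of Conjectures~\ref{separability_conjecture} and~\ref{discriminant_degree_conjecture}.
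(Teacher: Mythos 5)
This statement is recorded in the paper as a conjecture: the authors prove it only for $\mu=1$ and $n\in\{2,3,4,5\}$, and your proposal, as you yourself say, also stops short of a proof, so neither text settles the general case. For the cases that are settled, your route is genuinely different from theirs. The paper fixes $n$ and studies the affine plane curve $\mathcal{C}(n): P_n(x,\la)=0$, exploiting the fact that $\deg_\la P_n=n$ (half of $\deg_x P_n$) to solve for $\la$ as an explicit algebraic function of $x$ via the quadratic, Cardano, and Ferrari formulas when $n\le 4$; for $n=5$, where the monodromy over $\mathbb{C}(x)$ is unsolvable, they combine Puiseux parametrizations at the singular points $(0,0)$ and $(1,1)$, the symmetry of Lemma~\ref{key_poly_symmetry}, an $x$-monotonicity argument (absence of real solutions of $P_n=\frac{dP_n}{dx}=0$ away from the singular points), and the identity $P_5(x,x)=-\frac{105}{32}x^5(x-1)^5$ to pin down the topological profile of $\mathcal{C}(n)(\mathbb{R})$, from which the root count for each fixed $\la$ is read off. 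You instead fix $\la$ and work interval-by-interval in $x$ with boundary asymptotics of a reduced Wronskian. Your reduction-of-order identity and the generating function $\sum_{n\ge 0}P_n t^n/(n!\,f^n)=\sqrt{f(x+t)/f(x)}$ are both correct and are useful observations that do not appear in the paper, and your translation of the parity dictionary ($n-\mu=g+1$, so $\mu$ roots when $g$ is even and $2\mu$ when $g$ is odd) matches the paper's equivalent restatement of the conjecture.

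The gaps you flag are real, and they are exactly where the content lies; let me make the two most serious ones concrete. First, your only source of an \emph{exact} count is the complete-series base case $n=\mu+2$, but your continuation argument is a deformation in $\la$ at fixed $(\mu,n)$, whereas passing from $n=\mu+2$ to larger $n$ is a deformation in $n$: the adjoin-a-constant identity $W(\phi_0',\dots,\phi_{\mu-1}')=W(1,\phi_0,\dots,\phi_{\mu-1})$ changes the linear system but gives no control over how real zeros of the Wronskian appear or disappear under that change, so the induction on $n$ has no engine, and the boundary-asymptotics computation that would supply the count for $n>\mu+2$ is only asserted, not executed. Second, local constancy of $N(\la)$ requires ruling out three escapes: a root crossing $\{f=0\}$ (your observation $P_{m+1}(r_j)=(\tfrac12-m)f'(r_j)P_m(r_j)\ne 0$ does propagate, e.g.\ one computes $P_{2,n}(r_j)=\tfrac32 f'(r_j)P_{n-1}(r_j)P_n(r_j)\ne 0$, but the general $\mu$ case needs an argument rather than an appeal to the displayed formulas); a root escaping to $x=\pm\infty$, which you do not address; and two real roots colliding and becoming complex, which is precisely Conjecture~\ref{separability_conjecture} (strengthened from $P_n$ to $P_{\mu,n}$) and is open. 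So the proposal ultimately reduces the statement to other open conjectures plus an unexecuted endpoint computation; that is a reasonable research plan, and consistent with the statement being recorded only as a conjecture, but it is not a proof.
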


Equivalently, the conjecture predicts that {\it the codimension-$(g-1)$ linear subseries of $|\mc{L}_\mathbb{R}|$ spanned by $\mc{F}_g$ has precisely either $2(k-g)$ or $4(k-g)$ real inflection points away from $R_{\pi}$, depending upon whether $g \geq 1$ is even or odd}.

\section{Real loci of plane curves defined by key polynomials}\label{real_loci}              
\subsection{Degrees and symmetries of key polynomials}
We begin by showing that $P_n=P_n(x,\la)$ is of degree exactly $2n$ in $x$ (a consequence of Pl\"ucker's formula for $n\geq4$) and of degree exactly $n$ in $\la$. 

\begin{lemma}\label{key_poly_degree}
For every positive integer $n \geq 1$, we have
\[
\deg_x(P_n)=2n\quad \text{ and } \quad \deg_\la(P_n)=n.
\]
\end{lemma}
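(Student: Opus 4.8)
The plan is to prove both degree statements by induction on $n$, using the recursion \eqref{P_n_recursion} together with the seed \eqref{seed_datum}, and to obtain the sharper ``degree exactly'' claim by tracking leading coefficients rather than merely degree bounds. Write $f=x(x-1)(x-\la)$, so that $\deg_x f=3$ with leading coefficient $1$, and $\deg_\la f=1$ with leading coefficient (in $\la$) equal to $-x^2+x=-x(x-1)$. The recursion $P_{n+1}=f\cdot\frac{dP_n}{dx}+(\tfrac12-n)\,\frac{df}{dx}\cdot P_n$ raises the $x$-degree by at most $3-1=2$ at each step (since $\frac{dP_n}{dx}$ drops one degree and $f$ adds three, while $\frac{df}{dx}$ adds two), and raises the $\la$-degree by at most $1$ at each step. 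Since $P_1=\tfrac12 f'$ has $\deg_x=2$ and $\deg_\la=1$, this immediately gives the upper bounds $\deg_x(P_n)\leq 2n$ and $\deg_\la(P_n)\leq n$.

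For sharpness in $x$, I would carry an inductive hypothesis recording the leading coefficient $c_n$ of $P_n$ as a polynomial in $x$, i.e. $P_n=c_n x^{2n}+(\text{lower order})$. Both terms on the right-hand side of the recursion contribute to degree $2n+2$: the term $f\frac{dP_n}{dx}$ has leading coefficient $2n\cdot c_n$ (from $x^3\cdot 2n\,c_n x^{2n-1}$), and the term $(\tfrac12-n)f' P_n$ has leading coefficient $(\tfrac12-n)\cdot 3\cdot c_n$ (from $3x^2\cdot c_n x^{2n}$). Hence $c_{n+1}=\bigl(2n+3(\tfrac12-n)\bigr)c_n=(\tfrac32-n)c_n$. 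Starting from $P_1=\tfrac12 f'=\tfrac32 x^2+\cdots$, so $c_1=\tfrac32$, we get $c_n=\tfrac32\cdot\tfrac12\cdot(-\tfrac12)\cdots(\tfrac52-n)=\prod_{j=1}^{n}(\tfrac32-j)$, which vanishes only if some factor $\tfrac32-j$ is zero — impossible since $j$ is a positive integer. Therefore $c_n\neq 0$ and $\deg_x(P_n)=2n$ exactly. (This explicit product is also the arithmetic behind the Pl\"ucker-formula remark, since $\deg_x P_n$ controls the number of affine inflection points.)

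The $\la$-degree statement is handled the same way but is slightly more delicate because the leading $\la$-coefficient of $P_n$ is itself a polynomial in $x$ rather than a constant, so I must check that it is not the zero polynomial. Write $P_n=q_n(x)\,\la^n+(\text{lower order in }\la)$ and note $\frac{df}{dx}=3x^2-2(1+\la)x+\la$, whose $\la^1$-coefficient is $1-2x$, while $f$ has $\la^1$-coefficient $x-x^2=-x(x-1)$. Comparing coefficients of $\la^{n+1}$ in the recursion gives $q_{n+1}=-x(x-1)\,q_n'+(\tfrac12-n)(1-2x)\,q_n$; seeded by $q_1=\tfrac12(1-2x)$ (the $\la$-coefficient of $\tfrac12 f'$). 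One then observes inductively that $q_n$ is a nonzero polynomial: e.g. evaluating at a convenient point such as $x=\tfrac12$ kills the second term and gives $q_{n+1}(\tfrac12)=-\tfrac14 q_n'(\tfrac12)$, so it suffices to show $q_n'(\tfrac12)\neq 0$; alternatively, and more robustly, track the top-degree term of $q_n$ in $x$ (its $x$-degree is $n$, with leading coefficient another nonvanishing product of the form $\prod(\tfrac12-j+\text{something})$). Either bookkeeping shows $q_n\not\equiv 0$, whence $\deg_\la(P_n)=n$.

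The main obstacle is the last point: ruling out accidental cancellation in the leading $\la$-coefficient $q_n(x)$. Unlike the $x$-leading coefficient, which is a scalar and is pinned down by a clean one-term recursion, $q_n$ satisfies a genuine differential recursion in $x$, and one must find an invariant (a well-chosen evaluation point, or the $x$-leading term of $q_n$, or a parity/degree argument) that provably never vanishes along the hierarchy. I expect the cleanest route is to prove simultaneously that $\deg_x q_n = n$ with an explicitly computable nonzero leading coefficient, which then forces $q_n\not\equiv 0$ and completes the induction.
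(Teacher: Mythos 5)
Your argument for $\deg_x(P_n)=2n$ is correct and complete, and it is genuinely different from the paper's: the paper derives the $x$-degree \emph{geometrically}, by writing the total inflectionary degree of the $g^{g+3}_{2g+2}$ as $(2g+2)(g+3)=I_{R_\pi}+2\deg_x(P_{g+2})$ with $I_{R_\pi}=4\binom{g+1}{2}+2(g-1)$ and solving for $\deg_x(P_{g+2})$, whereas you track the leading $x$-coefficient through the recursion and find $c_{n+1}=\bigl(\tfrac32-n\bigr)c_n$, which never vanishes for integer $n$. Your computation checks out against the displayed $P_2,P_3,P_4$ (e.g. $c_2=\tfrac34$, $c_3=-\tfrac38$), and your purely algebraic route is arguably cleaner and covers all $n\geq1$ uniformly, at the cost of not explaining the Pl\"ucker-formula interpretation the paper wants on record. (Minor indexing slip: your product should read $c_n=\tfrac32\prod_{j=1}^{n-1}(\tfrac32-j)$, but the point — every factor is $\tfrac32$ minus an integer, hence nonzero — is unaffected.)

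The $\lambda$-degree half has a genuine gap, and you correctly flagged where it is. You derive exactly the paper's recursion $q_{n+1}=-x(x-1)q_n'+(\tfrac12-n)(1-2x)q_n$, but both of your proposed non-vanishing arguments fail as stated. Route (a): $q_1=\tfrac12(1-2x)$ gives $q_2=-\tfrac14$, a \emph{constant}, so $q_2'\equiv0$ and the evaluation $q_3(\tfrac12)=\tfrac14 q_2'(\tfrac12)=0$ tells you nothing (indeed $q_3=-\tfrac34x+\tfrac38$ really does vanish at $x=\tfrac12$). Route (b): the claim $\deg_x q_n=n$ is false — the top-degree terms of the two summands cancel at the first step, since the leading-coefficient recursion is $d_{n+1}=(n-1)d_n$, which is $0$ for $n=1$; in fact $\deg_x q_n=n-2$ for $n\geq2$. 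The fix within your framework is to restart the degree count at $n=2$: with $\deg_x q_n=n-2$ the leading coefficients satisfy $e_{n+1}=(n+1)e_n$ with $e_2=-\tfrac14$, which never vanishes, and this closes the induction. The paper instead uses a slicker one-shot argument: if $q_{n+1}\equiv0$ then $q_n$ would solve $y'/y=\frac{(-n+\tfrac12)(1-2x)}{x(x-1)}$, whose solutions $K(x^2-x)^{(2n-1)/2}$ are not polynomials — no leading-term bookkeeping needed. Either repair works; as submitted, the $\lambda$-degree claim is not proved.
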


\begin{proof}
The claim is clear for $n=1,2$; so we assume that $n \geq 3$. First recall that since $k=g+1$, we have $n=k+1=g+2$. So the geometric object of interest is a $g_{2g+2}^{g+3}$ on the elliptic curve $E$, whose total inflectionary degree may be realized as 
\begin{equation}\label{total_inflection}
    (2g+2)(g+3)=I_{R_{\pi}}+2\text{deg}_x(P_{g+2})
\end{equation}
where $I_{R_{\pi}}$ is the inflection concentrated in the ramification points, which is $4\binom{g+1}{2}+2(g-1)$. From \eqref{total_inflection} we deduce that $\text{deg}_x(P_{g+2})=2g+4=2(g+2)$, as desired.

To prove the second statement, we argue by induction. The claim is obvious when $n=1$; so assume that it holds for all $k\leq n$ for some $n>1$. We then have $$P_n=a_n(x)\lambda^n+Q_n(x,\lambda)$$ with $a_n(x)\in\mathbb{R}[x]$ and  deg$_\lambda(Q_n)<n$. Applying \eqref{P_n_recursion} to $P_n$ writen in this form, we see that the coefficient of $\lambda^{n+1}$ in $P_{n+1}$ is the polynomial $$a_{n+1}(x)=-x(x-1)\frac{da_{n}(x)}{dx}+(-n+1/2)(1-2x)a_n(x).$$
If $a_{n+1}(x)=0$, then $a_{n}(x)$ is a solution of the following differential equation $$\frac{y'}{y}=R(x)=\frac{(-n+1/2)(1-2x)}{x(x-1)}$$
but $y=Ke^{\int R(x)dx}=K(x^2-x)^{(2n-1)/2}$ is not a polynomial, so $a_{n+1}(x)\neq0$ and deg$_\lambda(P_{n+1})=n+1$.
\end{proof}

We next show that the curve defined by $P_n=0$ has special symmetry.
\begin{lemma}\label{key_poly_symmetry}
For every positive integer $n \geq 1$, we have
\begin{equation}\label{symmetry_property}
P_n(x,\la)= P_n(x,z) \text{ and }P_n(x+1,\la+1)= P_n(-x,-\la).
\end{equation}
Here by $P_n(x,z)$ we mean the polynomial of degree $2n$ obtained by homogenizing with respect to $z$ to obtain a degree-$2n$ polynomial in $\mb{R}[x,\la,z]$; and then dehomogenizing with respect to $\la$ to obtain a degree-$2n$ polynomial in $\mb{R}[x,z]$.
\end{lemma}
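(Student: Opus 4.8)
The plan is to establish both identities by induction on $n$, using nothing more than the recursion \eqref{P_n_recursion} and the seed \eqref{seed_datum}; the two assertions are independent in content but yield to the same bootstrapping mechanism.

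\emph{The second identity} (that $P_n(x+1,\la+1)=P_n(-x,-\la)$). Let $\sigma$ denote the substitution $(x,\la)\mapsto(x+1,\la+1)$ and $\tau$ the substitution $(x,\la)\mapsto(-x,-\la)$. Two structural facts drive the induction. First, for the Legendre cubic $f=x(x-1)(x-\la)$ one has $f\circ\sigma=-(f\circ\tau)$, and differentiating in $x$ gives $f'\circ\sigma=f'\circ\tau$. Second, precomposition with $\sigma$ commutes with $d/dx$, whereas precomposition with $\tau$ anticommutes with it. The base case $n=1$ is the direct check $P_1(x+1,\la+1)=\frac{1}{2}(3x^2+(2-2\la)x-\la)=P_1(-x,-\la)$. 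For the inductive step, assume $P_n\circ\sigma=P_n\circ\tau=:Q_n$. Applying $\sigma$ to \eqref{P_n_recursion} and invoking the two structural facts gives
\[
P_{n+1}\circ\sigma=-\frac{dQ_n}{dx}\,(f\circ\tau)+\left(-n+\frac{1}{2}\right)Q_n\,(f'\circ\tau);
\]
the same computation performed with $\tau$ in place of $\sigma$ produces exactly this right-hand side, so $P_{n+1}\circ\sigma=P_{n+1}\circ\tau$.

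\emph{The first identity} (the $\la\leftrightarrow z$ symmetry). Here I would pass to the homogenization $\widehat{P}_n\in\mb{R}[x,\la,z]$ of $P_n$ to total degree $2n$ and prove that $\widehat{P}_n$ is symmetric under the transposition $\la\leftrightarrow z$; this is precisely the claim, since $\widehat{P}_n(x,\la,1)=P_n(x,\la)$ while $\widehat{P}_n(x,1,z)$ is the polynomial the statement denotes $P_n(x,z)$. The engine is once more the recursion, which homogenizes cleanly: one computes $\widehat{f}=x(x-\la)(x-z)$ and $\partial_x\widehat{f}=3x^2-2(\la+z)x+\la z$, both manifestly invariant under $\la\leftrightarrow z$; one notes that $\widehat{\partial_x P_n}=\partial_x\widehat{P}_n$; and one arrives at
\[
\widehat{P}_{n+1}=\partial_x\widehat{P}_n\cdot\widehat{f}+\left(-n+\frac{1}{2}\right)\widehat{P}_n\cdot\partial_x\widehat{f},\qquad \widehat{P}_1=\frac{1}{2}\,\partial_x\widehat{f}.
\]
Since $\partial_x$ commutes with $\la\leftrightarrow z$ and the factors $\widehat{f}$ and $\partial_x\widehat{f}$ are symmetric, the induction delivers the symmetry of $\widehat{P}_n$ for every $n$.

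The step I expect to be the crux is justifying the homogenized recursion just displayed: one must check that homogenizing the right-hand side of \eqref{P_n_recursion} to degree $2n+2$ really reproduces the product of the separate homogenizations, with no stray powers of $z$. This comes down to the statement that $P_n$ has weighted degree exactly $2n$ when $x$ and $\la$ are each assigned weight $1$. The bound ``$\le 2n$'' is immediate from \eqref{P_n_recursion} by induction (using $\deg_x f=3$ and $\deg_x f'=2$), and equality holds because $\deg_x P_n=2n$ by Lemma~\ref{key_poly_degree}, so that the monomial $x^{2n}$ actually occurs. Once this is in place both inductions are routine bookkeeping; the only genuine pitfall is the sign produced when $d/dx$ is composed with the reflection $x\mapsto -x$ in the second identity. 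I also expect the first identity to admit a more conceptual proof, via the isomorphism $E(\la)\cong E(1/\la)$ induced on $x$-coordinates by $x\mapsto x/\la$ --- under which $\mc{F}_g$ is carried to itself up to a diagonal change of basis, so that the Wronskian zero locus, and hence $P_n$, transforms in a controlled way --- but the homogenized recursion is shorter and entirely self-contained.
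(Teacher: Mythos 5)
Your proof is correct and follows essentially the same route as the paper's: induction on $n$ via the recursion \eqref{P_n_recursion}, using $f(x+1,\la+1)=-f(-x,-\la)$, the resulting identity for $df/dx$, and the sign the chain rule introduces when $d/dx$ is composed with $x\mapsto -x$; for the first identity the paper likewise observes that the homogenized $f$ is symmetric in $\la$ and $z$ and concludes by induction. You are in fact somewhat more careful than the paper, which silently assumes that the total degree of $P_n$ is exactly $2n$ so that the recursion homogenizes term-by-term --- the point you rightly identify as the crux and settle via Lemma~\ref{key_poly_degree}.
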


\begin{proof}
We prove \eqref{symmetry_property} by applying the basic recursion \eqref{P_n_recursion}. Clearly both properties \eqref{symmetry_property} hold when $n=1$. For the first property, note that $f(x,\la)=f(x,z)$; the fact that the analogous property holds for $P_n$ is now immediate by induction using \eqref{P_n_recursion}. We now focus on the on the second symmetry in \eqref{symmetry_property}. By induction, we may assume it holds for all $n \leq N$, for some $N \geq 1$. Then because (the second symmetry in) \eqref{symmetry_property} is preserved under taking products and multiplying by scalars, $(-n+\fr{1}{2}) P_n \cdot \fr{df}{dx}$ also satisfies \eqref{symmetry_property}. It is slightly more delicate, but nevertheless true, that $\fr{dP_n}{dx} \cdot f$ satisfies \eqref{symmetry_property}. Indeed, one checks easily that $f(x+1,\la+1)=-f(-x,-\la)$, so it suffices to check that the $x$-derivative of $P_n$ satisfies the same antisymmetric property, but this follows easily from the chain rule.
\end{proof}

An upshot of Lemma~\ref{key_poly_symmetry} is that the monodromy group associated to the projection from the point $[0:0:1]$ of the projective closure $\ov{\mc{C}(n)}$ of the curve $\mc{C}(n)$ defined by $P_n=0$ inside of $\mb{P}^2_{x,\la,z}$ contains transpositions that freely permute the points $p_1$, $p_2$, and $p_3$ with coordinates $[0:0:1]$, $[0:1:0]$, and $[1:1:1]$. In particular, the singularities of $\ov{\mc{C}(n)}$ in these three points are analytically isomorphic.

\begin{conj}\label{singularity_conj}
For every positive integer $n \geq 1$, the plane curve $\ov{\mc{C}(n)}$ is nonsingular along $\mb{P}^2 \setminus \{p_1,p_2,p_3\}$.
\end{conj}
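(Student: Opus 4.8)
The plan is to study the affine curve $\mc{C}(n)=(P_n(x,\la)=0)$ in the $(x,\la)$-plane together with its three points at infinity (two honest points at infinity on $\mb{P}^2$, plus the behaviour near $\la=\infty$ and near the line $z=0$) via the defining recursion \eqref{P_n_recursion}. By Lemma~\ref{key_poly_symmetry} it suffices to show nonsingularity away from the single point $p_1=[0:0:1]$, since the monodromy transpositions furnished by that lemma make the local analytic type of $\ov{\mc{C}(n)}$ at $p_2,p_3$ isomorphic to that at $p_1$; and, a priori, we must allow those three points to be singular (indeed the discriminant computations in Conjecture~\ref{discriminant_degree_conjecture} show $0$ and $1$ occur with high multiplicity). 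So the real content is: \emph{$\ov{\mc{C}(n)}$ is smooth at every affine point $(\al,\be)$ with $(\al,\be)\notin\{(0,0),(0,1),(1,1)\}$, and smooth at the two points at infinity in the $(x{:}z)$-directions other than $[0:0:1]$.}

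First I would set up the smoothness criterion at an affine point. Writing $f=f(x,\la)=x(x-1)(x-\la)$, the recursion \eqref{P_n_recursion} reads $P_{n+1}=f\,P_n'+(-n+\tfrac12)f'\,P_n$ (derivative in $x$), so a point on $\mc{C}(n)$ is singular iff $P_n=\partial_x P_n=\partial_\la P_n=0$ there. The key algebraic step is to find a ``companion'' identity expressing the gradient of $P_{n+1}$ in terms of the gradient of $P_n$, together with $P_{n-1}$. Differentiating \eqref{P_n_recursion} in $x$ gives $\partial_x P_{n+1}=f\,P_n''+((-n+\tfrac32)f')\,P_n'+(-n+\tfrac12)f''P_n$; differentiating in $\la$ and using $\partial_\la f=-x(x-1)$, $\partial_\la f'=(2x-1)$ gives $\partial_\la P_{n+1}=f\,\partial_x\partial_\la P_n-x(x-1)P_n''+(-n+\tfrac12)\big((2x-1)P_n'+f'\,\partial_\la P_n+\partial_x\partial_\la f\cdot P_n\big)$ with $\partial_x\partial_\la f=-(2x-1)$. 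One then checks that if $P_n$, $\partial_x P_n$, $\partial_\la P_n$ all vanish at $(\al,\be)$ then so do $P_{n+1}$, $\partial_x P_{n+1}$, $\partial_\la P_{n+1}$ \emph{provided} $P_{n-1}$ and its relevant derivatives are controlled — i.e. a singular point of $\mc{C}(n)$ propagates backward down the hierarchy, forcing a common zero of $P_1,\dots,P_n$. Since $P_1=\tfrac12 f'=\tfrac12(3x^2-2(1+\la)x+\la)$ has, together with $\partial_x P_1$ and $\partial_\la P_1=\tfrac12(1-2x)$, no common zero at all (the last forces $x=\tfrac12$, then $\partial_x P_1=0$ forces $\la=\tfrac32$, but then $P_1=\tfrac12(\tfrac34-\tfrac52+\tfrac32)=-\tfrac1{16}\neq0$), this backward propagation would contradict the existence of an affine singular point other than at the three exceptional common zeros of $f$ and $f'$. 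The points at infinity I would handle by passing to the leading forms: Lemma~\ref{key_poly_degree} pins down $\deg_x P_n=2n$ and $\deg_\la P_n=n$, and from the recursion the top-degree-in-$x$ part of $P_n$ is explicitly $c_n x^{2n}$ for an easily-computed nonzero constant $c_n$, while the leading form in the $(x,\la)$-bihomogenization can be tracked the same way; smoothness at the two non-$p_1$ points at infinity then reduces to a short finite check on these leading forms.

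The main obstacle I anticipate is making the ``backward propagation'' argument airtight: the naive computation above shows a singular point of $\mc{C}(n)$ forces $P_n=\partial_x P_n=\partial_\la P_n=0$, and one wants to conclude $P_{n-1}$ (and a derivative of it) vanish too, but the linear-algebra of the $2\times 2$ (or $3\times 3$) system relating the gradients of consecutive $P$'s can degenerate exactly along the locus $f=0$ or $f'=0$ — which is why $p_1,p_2,p_3$ must be excluded. So the crux is to show that the ``transfer matrix'' governing $(\nabla P_{n+1},P_{n+1})$ in terms of $(\nabla P_n, P_n, \nabla P_{n-1})$ is invertible precisely off $\{f=0\}\cup\{f'=0\}$, and that $\{f=0\}\cap\{f'=0\}=\{(0,0),(0,1),(1,1)\}$ as $\la$ varies, so that the only surviving potential singular points are $p_1,p_2,p_3$ — which is exactly the statement of the conjecture. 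An alternative, perhaps cleaner, route to the same end is to reinterpret $P_n$ as (a normalization of) the $n$-th derivative datum $y^{(n)}=f^{-n}yP_n$ and to use the fact that the Wronskian-type vanishing at an inflection point is governed by the osculating flag of the rational normal-type curve $x\mapsto(1,x,\dots,x^{2n})$; smoothness of $\mc{C}(n)$ off the three exceptional points would then follow from the genericity (transversality) of that osculating flag at points where $f\neq 0$, together with the degree count of Lemma~\ref{key_poly_degree}. I would pursue the recursion-based argument first, since all the needed ingredients (\eqref{P_n_recursion}, \eqref{seed_datum}, Lemma~\ref{key_poly_degree}, Lemma~\ref{key_poly_symmetry}) are already in hand, and fall back on the osculating-flag picture only if the degeneracy analysis of the transfer matrix proves intractable.
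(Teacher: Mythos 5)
First, a point of order: the paper does not prove this statement --- it is stated and left as a conjecture. The authors only report that it ``is easy to verify by computer (i.e.\ algorithmically, using Groebner bases) whenever $n$ is small,'' and explicitly say they ``have thus far been unable to show by hand'' that the system $P_n=\partial_x P_n=\partial_\lambda P_n=0$ has only the solutions $(0,0)$ and $(1,1)$. So there is no proof in the paper to compare yours against; any complete argument here would be new.

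Second, your proposal has a genuine gap at exactly the step you flag as the crux, and the problem is worse than a degenerating transfer matrix. The recursion $P_{n+1}=f\,\partial_xP_n+(-n+\tfrac12)(\partial_xf)P_n$ expresses $P_{n+1}$ through the $1$-jet of $P_n$, but $\partial_xP_{n+1}$ involves $\partial_x^2P_n$ and $\partial_\lambda P_{n+1}$ involves the mixed partial $\partial_x\partial_\lambda P_n$: at a point where $P_n=\partial_xP_n=\partial_\lambda P_n=0$ one gets $\partial_xP_{n+1}=f\,\partial_x^2P_n$ and $\partial_\lambda P_{n+1}=f\,\partial_x\partial_\lambda P_n$, and these second-order data are not controlled by the vanishing of the gradient. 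Hence vanishing of $(P_n,\nabla P_n)$ does not propagate to $(P_{n+1},\nabla P_{n+1})$, and a fortiori there is no finite ``transfer matrix'' relating the gradients of consecutive key polynomials whose invertibility off $\{f=0\}\cup\{\partial_xf=0\}$ could be analyzed: each step of the intended induction, in either direction, introduces fresh unconstrained jet coordinates of the lower $P_m$, so the linear system is underdetermined and the reduction to the seed $P_1$ never closes. (Note also that the propagation you write down runs from level $n$ to level $n+1$, i.e.\ forward, whereas reducing to $P_1$ requires the converse.) Two smaller inaccuracies: the affine exceptional set is $\{(0,0),(1,1)\}$, not $\{(0,0),(0,1),(1,1)\}$ --- indeed $\{f=\partial_xf=0\}=\{(0,0),(1,1)\}$ since $\partial_xf(0,1)=1\neq0$ --- and $p_2=[0:1:0]$ is a point at infinity, not an affine point. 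The osculating-flag fallback is closer in spirit to what a genuine proof would need, but as sketched it is only a heuristic, so the proposal does not establish the conjecture.
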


Note that Conjecture~\ref{singularity_conj} is easy to verify by computer (i.e. algorithmically, using Groebner bases) whenever $n$ is small. However, we have thus far been unable to show ``by hand" that the system of equations $P_n(x,\lambda)= \frac{dP_n}{dx}= \frac{dP_n}{d\lambda}$ is exactly solved by the coordinate pairs $(0,0)$ and $(1,1)$ in general. The next four subsections are devoted to proving Conjecture~\ref{numerology_of_key_polynomial_roots} in the first nontrivial cases.

\begin{thm} Conjecture~\ref{numerology_of_key_polynomial_roots} holds when $\mu=1$ and $n=2,3,4,5$.
\end{thm}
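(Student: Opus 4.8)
Throughout $\mu=1$, so $P_{\mu,n}=P_n$; the case $n=3$ is the complete‑series case $n=\mu+2$ recorded just before Conjecture~\ref{numerology_of_key_polynomial_roots} and requires nothing, so the real content is $n\in\{2,4,5\}$. My plan is to reduce to $\la\in(0,1)$, show that the relevant root counts are locally constant there, and then evaluate them. For the reduction: by Lemma~\ref{key_poly_symmetry} the zero set of $P_n$ is invariant under the substitutions $(x,\la)\mapsto(x/\la,1/\la)$ and $(x,\la)\mapsto(1-x,1-\la)$, and since $f(x/\la,1/\la)=f(x,\la)/\la^{3}$ and $f(1-x,1-\la)=-f(x,\la)$, a suitable composition of these carries $\la$ from any one of $(-\infty,0)$, $(0,1)$, $(1,\infty)$ to any other by an affine substitution $x\mapsto ax+b$ under which the real roots $\gamma$ of $P_n$ with $f(\gamma,\la)>0$ biject. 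So it suffices to prove, for $\la\in(0,1)$ (where $\{f>0\}=(0,\la)\cup(1,\infty)$), that $P_n(\,\cdot\,,\la)$ has exactly one root in $(1,\infty)$, together with no root in $(0,\la)$ when $n=2,4$ and exactly one root in $(0,\la)$ when $n=3,5$.

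The next step is to show that the number of roots of $P_n(\,\cdot\,,\la)$ in each of $(0,\la)$ and $(1,\infty)$ is constant for $\la\in(0,1)$. A root can leave one of these intervals only by reaching the (possibly moving) endpoint $0$, $\la$ or $1$, or by escaping to $+\infty$; and the count can also jump if two roots collide. The first is impossible because $P_n$ does not vanish at $x\in\{0,1,\la\}$ when $\la\neq0,1$: by \eqref{seed_datum}, $P_1=\tfrac12 f'$ has this property, and if $P_m(x_0)\neq0$ at such an $x_0$ then \eqref{P_n_recursion} gives $P_{m+1}(x_0)=(\tfrac12-m)f'(x_0)P_m(x_0)\neq0$, using that $f$ is separable. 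Escape to $\infty$ is impossible because the coefficient of $x^{2n}$ in $P_n$ is the nonzero constant $c_n$ determined by $c_1=\tfrac32$, $c_{m+1}=(\tfrac32-m)c_m$, so $\deg_x P_n=2n$ identically and the complex roots of $P_n(\,\cdot\,,\la)$ are bounded uniformly for $\la\in[0,1]$. A collision inside $(0,\la)\cup(1,\infty)$ would be a double real root, hence a real zero of the $x$‑discriminant of $P_n$ other than $0$ or $1$, which Section~2 records does not occur for $n\leq9$. Hence both counts are locally constant, so constant, on $(0,1)$.

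It then remains to evaluate the two constants. On $(1,\infty)$ I would pass to the limit $\la\to0^{+}$: applying the Leibniz rule to the local expression $y=x(x-1)^{1/2}$ in $y^{(n)}=f^{-n}yP_n$ at $\la=0$ (so $f=x^{2}(x-1)$) yields $P_n(x,0)=c_n\,x^{2n-1}\bigl(x-\tfrac{2n}{3}\bigr)$, whose only root in $(1,\infty)$ is the simple root $\tfrac{2n}{3}>1$ (for $n\geq2$), the remaining $2n-1$ roots sitting at $0$; by continuity and the uniform bound, $P_n(\,\cdot\,,\la)$ has exactly one root in $(1,\infty)$ for all small $\la>0$, hence (by the previous step) for all $\la\in(0,1)$. (Alternatively one can bypass the limit via the identity $P_n''=n(2n-1)(5-2n)\,P_{n-1}$ — readily confirmed for small $n$, e.g.\ $P_2''=6P_1$, $P_3''=-15P_2$ — which feeds a short convexity induction on $(1,\infty)$ from the explicit endpoint signs of $P_n$ and $P_n'$ there.) On $(0,\la)$: for $n=2$ one has $P_2=\tfrac14\bigl(2ff''-(f')^{2}\bigr)$, whence $P_2'=3f>0$ on $(0,\la)$ while $P_2(0)=-\tfrac14(f'(0))^{2}<0$ and $P_2(\la)=-\tfrac14(f'(\la))^{2}<0$, so $P_2<0$ throughout $(0,\la)$; for $n=3$, $P_3''=-15P_2>0$ on $(0,\la)$, so $P_3$ is convex there, and since $P_3(0^{+})=\tfrac38\la^{3}>0>\tfrac38(f'(\la))^{3}=P_3(\la^{-})$ it has exactly one root in $(0,\la)$ — consistent with the complete‑series count for $n=3$; for $n=4,5$ I would count the roots of $P_n(\,\cdot\,,1/2)$ in $(0,1/2)$ by a Sturm sequence (legitimate by the preceding step, and unambiguous since $P_n$ is separable), obtaining $0$ and $1$ respectively. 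Adding the two contributions gives the totals $1,2,1,2$ for $n=2,3,4,5$, which is the assertion.

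The hard part is the interval $(0,\la)$ when $n\geq4$: the convexity bookkeeping that settles $(1,\infty)$ does not run uniformly there — already for $n=4$ the sign of $P_n'$ at $x=\la$, and the position of the root of $P_3$ in $(0,\la)$ relative to the critical points of $f$, bifurcate at $\la=\tfrac12$ — while the local structure of $\{P_n=0\}$ near $x=0$ is precisely the singularity of $\ov{\mc{C}(n)}$ at $p_1$ addressed by Conjecture~\ref{singularity_conj}. This is why the cases $n=4,5$ are handled by the explicit computation at $\la=\tfrac12$, and it is the obstacle to establishing the full conjecture by this route. (A further, genuinely external, input is the computer verification of the discriminant statement used in the second step.)
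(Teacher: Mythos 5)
Your argument is sound in its main line but follows a genuinely different route from the paper. The paper treats $\mc{C}(n):P_n(x,\la)=0$ as a multivalued graph of $\la$ over $x$: it solves $\la=\la(x)$ by radicals for $n=2,3,4$ (quadratic formula, Cardano, Ferrari), and for $n=5$ --- where the monodromy is unsolvable --- it combines local Puiseux parametrizations at $(0,0)$ and $(1,1)$ with an $x$-monotonicity argument (resting on the computer-verified fact that $P_n=\partial_x P_n=0$ has no real affine solutions off $\{(0,0),(1,1)\}$), the Implicit Function Theorem, and the identity $P_5(x,x)=-\tfrac{105}{32}x^5(x-1)^5$, to pin down the topological profile of $\mc{C}(n)(\mb{R})$ and read off the root counts. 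You instead fix $\la$, reduce to $\la\in(0,1)$ via Lemma~\ref{key_poly_symmetry}, prove the root counts on $(0,\la)$ and $(1,\infty)$ are constant in $\la$ --- your observations that $P_n$ never vanishes at the roots of $f$ when $f$ is separable, and that the leading $x$-coefficient satisfies $c_{m+1}=(\tfrac32-m)c_m\neq 0$, are both correct, and the latter gives a cleaner proof of $\deg_x P_n=2n$ than the paper's Pl\"ucker argument --- and then evaluate in the limit $\la\to 0^+$ via the closed form $P_n(x,0)=c_n x^{2n-1}(x-\tfrac{2n}{3})$ (which I have checked), resp.\ at $\la=\tfrac12$. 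Your route is more uniform in $n$ and scales better in principle; what it gives up is the explicit topology of $\mc{C}(n)(\mb{R})$ that the paper is ultimately after (Conjecture~\ref{topological_profile_of_real_locus}), and it leans on comparable computational inputs (separability of $P_n(\cdot,\la)$ for real $\la\neq 0,1$, plus two Sturm counts that you assert for $n=4,5$ but do not execute).

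Two specific cautions. First, your parenthetical identity $P_n''=n(2n-1)(5-2n)P_{n-1}$ is false for $n\geq 4$: the $\la^4$-coefficient of $P_4$ is $-3x^2+3x-\tfrac{15}{16}$, so $\partial_x^2 P_4$ has $\la^4$-coefficient $-6\neq 0$, whereas $\deg_\la P_3=3$. It holds for $n=2,3$, and you only use the correct instance $P_3''=-15P_2$, so nothing breaks --- but the general claim should be deleted. Second, in the reduction step the substitution $(x,\la)\mapsto(1-x,1-\la)$ reverses the sign of $f$, and $(x,\la)\mapsto(x/\la,1/\la)$ does so as well when $\la<0$; so the generators themselves do not preserve the condition $f(\gamma,\la)>0$, and you need to exhibit the specific composition (e.g.\ $\la\mapsto \la/(\la-1)$, which acts on $x$ by an orientation-preserving affine map) carrying $(-\infty,0)$ to $(0,1)$. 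This works out, but ``a suitable composition'' is doing real work and should be made explicit.
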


In fact, the case $n=2$ is non-geometric (since $k=1$ in that case) and when $n=3$ the associated linear series is complete (so in particular Conjecture~\ref{singularity_conj} holds on the basis of \cite[Thm 3.2.5]{Ga}), so the first geometrically interesting case  $n=4$. However as the argument used in the proof when $n=4$ is a more elaborate version of the argument for $n=3$ (which is itself a more elaborate version of that used for $n=2$) we find it instructive to include a discussion of the $n=2$ and $n=3$ cases as well. The key point here is that the recursion~\eqref{P_n_recursion} implies that the $n$th key polynomial $P_n=P_n(x,\la)$, which is of degree  $2n$ in $x$, is only of degree $n$ in $\la$. 
Accordingly we can obtain a {\it global} parametrization $\la=\la(x)$ for $\la$ in terms of $x$ whenever $n \leq 4$.

\subsection{Case: $n=2$} We begin by writing $P_2$ as a quadratic polynomial in $\la$ with coefficients in $\mb{R}[x]$, namely
\[
P_2(x,\la)= -\fr{1}{4} \la^2+ \bigg(-x^3+ \fr{3}{2}x^2 \bigg)\la+ \bigg(-x^3+ \fr{3}{4}x^4 \bigg). 
\]
Applying the quadratic formula, we deduce that
\[
\la=\la(x)= (-2x^3+3x^2) \mp 2(x(x-1))^{3/2}
\]
solves $P_2=0$. Conjecture~\ref{numerology_of_key_polynomial_roots} in this particular case follows easily.

\subsection{Case: $n=3$} We have
\[
P_3(x,\la)= \bigg(-\fr{3}{4}x+\fr{3}{8}\bigg) \la^3+ \bigg(\fr{15}{8}x^2- \fr{3}{4}x \bigg)\la^2+ \bigg(\fr{3}{4}x^5-\fr{15}{8}x^4 \bigg)\la+ \bigg(-\fr{3}{8}x^6+ \fr{3}{4}x^5 \bigg).
\]
Whenever $x \neq \fr{1}{2}$, the leading coefficient $-\fr{3}{4}x+\fr{3}{8}$ of $P_3$ is nonzero. Assuming this to be the case, we may divide by the leading coefficient. Doing so produces a monic cubic $\wt{P}_3$ in $\la$ whose coefficients are (formally) power series in $x$, namely
\[
\wt{P}_3(x,\la)= \la^3+ \bigg(-\fr{5}{2}+ \fr{1}{2}\al\bigg)x\la^2- (1+4\al)x^4 \la+ \bigg(\fr{1}{2}+ \fr{3}{2}\al\bigg)x^5
\]
where $\al= \fr{1}{1-2x}$. Now set $a_i:= [\la^i]\wt{P}_3$, $i=0,1,2$. Changing variables according to 
$\la \mapsto \la^{\ast}$ where 
\[
\la^{\ast}= \la+ \fr{a_2}{3}= \la+ \bigg(-\fr{5}{6}+\fr{1}{6}\al\bigg)x
\]
allows $\wt{P}_3$ to be rewritten as a {\it depressed cubic}, namely
\[
\wt{P}^{\ast}_3= \wt{P}^{\ast}_3(x,\la^{\ast})= (\la^{\ast})^3+ p\la^{\ast}+ q 
\]
where
{\small
\[
\begin{split}
p&=a_1- \fr{a_2^2}{3}= -(1+4\al)x^4+ \bigg(-\fr{25}{12}+ \fr{5}{6}\al- \fr{1}{12}\al^2 \bigg)x^2
\text{ and } \\
q&= a_0- \fr{a_1a_2}{3}+ \fr{2a_2^3}{27}= \fr{1}{3}(2\al^2-5\al-1)x^5+ \fr{1}{108}(\al-5)^3 x^3. 
\end{split}
\]
}
Cardano's formula now establishes that
\[
\la^{\ast}= u+v
\]
solves $\wt{P}^{\ast}_3=0$, where
\begin{equation}\label{u_and_v}
u= \sqrt[3]{-\fr{q}{2}+ \sqrt{\bigg(\fr{q}{2}\bigg)^2+ \bigg(\fr{p}{3}\bigg)^3} }
\text{ and }v= \sqrt[3]{-\fr{q}{2}- \sqrt{\bigg(\fr{q}{2}\bigg)^2+ \bigg(\fr{p}{3}\bigg)^3}}.
\end{equation}
Note that when 
\[
\De=\De(x):=\bigg(\fr{q}{2}\bigg)^2+ \bigg(\fr{p}{3}\bigg)^3
\]
is negative, the expression for $u$ in \eqref{u_and_v} selects for an arbitrary cube root, and $v:=\ov{u}$. In the latter situation, the corresponding cubic $\wt{P}^{\ast}_3= \wt{P}^{\ast}_3(x)$ has three real roots.

In our case, we have $\De= \fr{1}{432}x^8\al^4 R$, where
{\small
\[
\begin{split}
R&= -256(x-1)^8.
 \end{split}
 \]
 }
In particular, {\it $\De$ is negative whenever $x \notin \{0,1, \fr{1}{2} \}$}. From \eqref{u_and_v} we deduce that
\begin{equation}\label{u_simplified}
u= \fr{x(x-1)}{x-\fr{1}{2}} \cdot  \sqrt[3]{\fr{1}{18}\bigg(x-\fr{1}{2} \bigg)^2+\fr{5}{216}+ \fr{1}{3 \sqrt{3}} (x(x-1)) \bigg(x - \fr{1}{2} \bigg) \sqrt{-1}}.
\end{equation}

Further, by negativity of the discriminant $\De$, we may always distinguish three distinct roots of $\wt{P}^{\ast}_3(x)$ given by $u+ \ov{u}$, $\zeta u + \ov{\zeta u}$, and $\zeta^2 u+ \ov{\zeta^2 u}$, where $\zeta$ is a primitive cube root of unity and $u$ is as in \eqref{u_simplified}, assuming that an arbitrary cubic root of $-q/2+ \sqrt{\De}$ is fixed.

\medskip
Now let
\[
\wt{v}=\wt{v}(x):= \fr{1}{18}\bigg(x-\fr{1}{2} \bigg)^2+\fr{5}{216}+ \fr{1}{3 \sqrt{3}} (x(x-1)) \bigg(x - \fr{1}{2} \bigg) \sqrt{-1}.
\]
We can rewrite $\wt{v}$ as $\cos(\ga)+ \sin(\ga) \sqrt{-1}$, where
\[
\ga=\ga(x^{\ast}):= \arctan \Bigg(\fr{6}{\sqrt{3}} \cdot \fr{x^{\ast}((x^{\ast})^2-\fr{1}{4})}{(x^{\ast})^2+\fr{5}{12}} \Bigg)
\]
and where $x^{\ast}=x- \fr{1}{2}$. Putting everything together, we obtain three distinguished solutions $\wt{\la}_i=\wt{\la}_i(x^{\ast})$, $i=1,2,3$ of $P_3=P_3(x^{\ast})=0$, given by
{\small
\[
\begin{split}
\wt{\la}_1&= \bigg(\fr{(x^{\ast})^2-\fr{1}{4}}{x^{\ast}}\bigg) \cdot 2 \cos \bigg(\arctan \bigg(\fr{6}{\sqrt{3}} \cdot \fr{x^{\ast}((x^{\ast})^2-\fr{1}{4})}{(x^{\ast})^2+\fr{5}{12}} \bigg)\bigg/3\bigg)+ \fr{5}{6}x^{\ast}+ \fr{1}{2}+ \fr{1}{24} \cdot \fr{1}{x^{\ast}}, \\
\wt{\la}_2&= \bigg(\fr{(x^{\ast})^2-\fr{1}{4}}{x^{\ast}}\bigg) \cdot 2 \cos \bigg(\arctan \bigg(\fr{6}{\sqrt{3}} \cdot \fr{x^{\ast}((x^{\ast})^2-\fr{1}{4})}{(x^{\ast})^2+\fr{5}{12}} \bigg)\bigg/3+ 2\pi/3\bigg)+ \fr{5}{6}x^{\ast}+ \fr{1}{2}+ \fr{1}{24} \cdot \fr{1}{x^{\ast}}, \text{ and } \\
\wt{\la}_3&= \bigg(\fr{(x^{\ast})^2-\fr{1}{4}}{x^{\ast}}\bigg) \cdot 2 \cos \bigg(\arctan \bigg(\fr{6}{\sqrt{3}} \cdot \fr{x^{\ast}((x^{\ast})^2-\fr{1}{4})}{(x^{\ast})^2+\fr{5}{12}} \bigg)\bigg/3+ 4\pi/3\bigg)+ \fr{5}{6}x^{\ast}+ \fr{1}{2}+ \fr{1}{24} \cdot \fr{1}{x^{\ast}}.
\end{split}
\]
}

Conjecture~\ref{numerology_of_key_polynomial_roots} now follows in the present case from the following subsidiary claims about the real locus of the affine plane curve $\mc{C}(3)$.
\begin{enumerate}
\item When $x<0$, the real locus is comprised of three infinite $x$-monotone (i.e. nondecreasing with respect to $x$) curves $\la_i=\la_i(x)$, $i=1,2,3$ with $\la_1>0>\la_2>x>\la_3$. These curves intersect in the limit point $(0,0)$ (whose associated value of $\la$ describes a singular rational curve). Moreover, $\la_3$ has a vertical asymptote at $x=\fr{1}{2}$; and 
\[
-\infty=\lim_{x \ra -\infty} \la_2= \lim_{x \ra -\infty} \la_3= -\lim_{x \ra -\infty} \la_1.
\]
\item When $0<x<1$, the real locus is a union of monotone curves $\la_i$, $4 \leq i \leq 7$. Here $\la_4>x>\la_5>\la_6$; $\la_4$, $\la_5$, and $\la_6$ intersect in $(0,0)$; $\la_4$ and $\la_5$ connect $(0,0)$ with $(1,1)$, where they also intersect $\la_7$. Moreover, we have $\la_7>\la_4$ and
\[
\lim_{x \ra \fr{1}{2}^-} \la_6= -\lim_{x \ra \fr{1}{2}^+} \la_7= -\infty.
\]
\item When $x>1$, the real locus is comprised of three infinite monotone curves $\la_i$, $i=8,9,10$, where $\la_8>x>\la_9>\la_{10}$. These intersect in their common limit point $(1,1)$, in which the associated curve $E(\la)$ is singular. Moreover, we have
\[
\infty= \lim_{x \ra \infty} \la_8= \lim_{x \ra \infty} \la_9=-\lim_{x \ra \infty} \la_{10}
\]
\end{enumerate}

\qed

\subsection{Case: $n=4$} Here our analysis closely follows that of the $n=3$ case. To begin, we have
{\tiny
\[
P_4(x,\la)= \bigg(-3x^2+3x-\fr{15}{16} \bigg) \la^4+ \bigg(\fr{21}{2}x^3- \fr{39}{4}x^2+ 3x \bigg) \la^3+ \bigg(-\fr{105}{8}x^4+ \fr{21}{2}x^3- 3x^2 \bigg)\la^2+ \bigg(-\fr{3}{2}x^7+ \fr{21}{4} x^6 \bigg) \la+ \bigg(\fr{9}{16} x^8- \fr{3}{2}x^7 \bigg).
\]
}
Now set $\be=\fr{1}{x^2-x+\fr{5}{16}}= \fr{1}{(x-\fr{1}{2})^2+ \fr{1}{16}}$. Note that $\be$ is nonzero for all real values of $x$. Dividing $P_4$ by its leading coefficient yields
{\tiny
\[
\begin{split}
\wt{P}_4(x,\la)&= \la^4+ \bigg(-\fr{7}{2}x- \fr{1}{4}+ \bigg(-\fr{5}{32}x + \fr{5}{64} \bigg)\be\bigg)\la^3+ \bigg(\fr{35}{8}x^2+ \fr{7}{8}x+ \fr{65}{128}+ \bigg(\fr{15}{64}x- \fr{325}{2048}\bigg)\be\bigg)\la^2 \\
&+ \bigg(\fr{1}{2}x^5- \fr{5}{4}x^4- \fr{45}{32}x^3- \fr{65}{64}x^2- \fr{295}{512}x- \fr{265}{1024}+ \bigg(-\fr{645}{8192}x+ \fr{1325}{16384}\bigg)\be\bigg)\la \\
&+ \bigg(-\fr{3}{16}x^6+ \fr{5}{16}x^5+ \fr{95}{256}x^4+ \fr{35}{128}x^3+ \fr{645}{4096}x^2+ \fr{295}{4096}x+ \fr{1495}{65536}+ \bigg(\fr{5}{16384}x- \fr{7475}{1048576}\bigg)\be\bigg).
\end{split}
\]
}
Let $b_i:= [\la^i]\wt{P}_4(x,\la)$, $0 \leq i \leq 3$. Changing variables according to $\la \mapsto \la^{\ast}$ with $\la= \la^{\ast}-\fr{b_3}{4}$ enables us to convert $\wt{P}_4(x,\la)$ to a depressed quartic $\wt{P}^{\ast}_4(x,\la^{\ast})$ of the form $(\la^{\ast})^4+ p(\la^{\ast})^2+q\la^{\ast}+r$, where
\[
p= b_2- \fr{3}{8}b_3^2, q= \fr{1}{8}b_3^3- \fr{1}{2}b_2b_3+ b_1, \text{ and } r= -\fr{3}{256}b_3^4+ b_0- \fr{1}{4}b_1b_3+ \fr{1}{16}b_2b_3^2.
\]
According to Ferrari, the roots of $\wt{P}^{\ast}_4(x,\la^{\ast})$ are given by
\begin{equation}\label{P4solution}
\la^*=\fr{ \pm_1 \sqrt{2m} \pm_2 \sqrt{-(2p+2m \pm_1 \fr{\sqrt{2}q}{\sqrt{m}})}}{2}
\end{equation}
where $m$ denotes any root of the resolvent cubic
\[
K(z)= 8z^3+ 8pz^2+ (2p^2-8r)z-q^2
\]
associated with $\wt{P}^{\ast}_4$. Here the monic depressed cubic associated with $K$ is 
{\small
\[
\begin{split}
K_0 &= z^3-\bigg(\fr{1}{12}p^2+r \bigg)z+ \bigg(-\fr{1}{108}p^3+ \fr{1}{3}rp-\fr{1}{8}q^2 \bigg) \\
&= z^3+ \bigg(-b_0- \fr{1}{12}b_2^2+ \fr{1}{4}b_1b_3 \bigg)z+ \bigg(-\fr{1}{8}b_1^2+ \fr{1}{3}b_0b_2- \fr{1}{108}b_2^3+ \fr{1}{24} b_1b_2b_3- \fr{1}{8} b_0b_3^2 \bigg).
\end{split}
\]
}
Now let $p_0:=[z]K_0$ and $q_0:=[z^0]K_0.$ From Cardano it follows that roots of $K_0$ are given by $m_0=u+v$, where
\[
u=\sqrt[3]{-\fr{q_0}{2}+ \sqrt{\De_0}}
\text{ and }v= \sqrt[3]{-\fr{q_0}{2}- \sqrt{\De_0}}.
\]
Here $\De_0:=(\fr{q_0}{2})^2+ (\fr{p_0}{3})^3$ is the discriminant of $K_0$. In this case
{\small
\[
q_0= -\fr{\be^3}{2^5} x^6 (x-1)^6 \bigg(\bigg(x-\fr{1}{2}\bigg)^4+ \fr{5}{6}\bigg(x-\fr{1}{2}\bigg)^2+ \fr{11}{432}\bigg) \text{ and } \De_0= \fr{\be^6}{2^{12} } x^{15}(x-1)^{15} \bigg(\bigg(x-\fr{1}{2}\bigg)^2+\fr{5}{108}\bigg).
\]
}

\medskip
It follows that
$u=\fr{\be}{4} x^2(x-1)^2 \sqrt[3]{U_1+ \sqrt{U_2}}$ and $v=\fr{\be}{4} x^2(x-1)^2 \sqrt[3]{U_1- \sqrt{U_2}}$
where
\[
U_1:=\bigg(x-\fr{1}{2}\bigg)^4+ \fr{5}{6}\bigg(x-\fr{1}{2}\bigg)^2+ \fr{11}{432} \text{ and } U_2:=x^3(x-1)^3\bigg(\bigg(x-\fr{1}{2}\bigg)^2+\fr{5}{108}\bigg).
\]
Here $U_1$ is positive for all real values of $x$, while $U_2$ is positive (resp., negative) whenever $x<0$ or $x>1$ (resp., whenever $0<x<1$). Moreover, we have
\begin{equation}\label{U1squaredminusU2}
U_1^2- U_2= \fr{64}{27} \bigg(\bigg(x-\fr{1}{2}\bigg)^2+ \fr{1}{12}\bigg)^3
\end{equation}
which is clearly positive for all real-valued $x$. Accordingly, whenever $x<0$ or $x>1$, we take $\sqrt{U_2(x)}$ to mean the positive square root of $U_2(x)$; it follows that $U_1-\sqrt{U_2}$ and $U_1+\sqrt{U_2}$ are positive. In this situation, we denote by $u=u(x)$ and $v=v(x)$ the real functions that select for $\fr{\be}{12} x^2(x-1)^2$ times the unique real positive cubic roots of $U_1+ \sqrt{U_2}$ and $U_1- \sqrt{U_2}$, respectively. Similarly, whenever $0<x<1$, we take $\sqrt{U_2(x)}$ to mean $\sqrt{-1}$ times the positive square root of $-U_2(x)$, and then $U_1+ \sqrt{U_2}$ and $U_1- \sqrt{U_2}$ become complex conjugates of one another, with positive real parts. Finally, we obtain
{\small
\begin{equation}\label{m_expression}
\begin{split}
m&= m_0-\fr{1}{3}p \\
&= \fr{\be}{4} x^2(x-1)^2 \sqrt[3]{U_1+ \sqrt{U_2}}+ \fr{\be}{12} x^2(x-1)^2 \sqrt[3]{U_1- \sqrt{U_2}}- \fr{1}{3}b_2+ \fr{1}{8}b_3^2 \\
&= \fr{\be}{4} x^2(x-1)^2 \bigg(\ga+ \fr{7\be}{24} \bigg(\bigg(x-\fr{1}{2}\bigg)^2+ \fr{1}{28}\bigg)\bigg).
\end{split}
\end{equation}
}
where $\ga= \ga(x):=\sqrt[3]{U_1+ \sqrt{U_2}}+ \sqrt[3]{U_1- \sqrt{U_2}}$. Note here that $\ga$ satisfies the algebraic equation $\ga^3-2U_1- 4((x-\fr{1}{2})^2+ \fr{1}{12})\ga=0.$

\medskip
It follows from the above discussion that our choice of $m$ in \eqref{m_expression} is always positive, and it remains to control for the sign of the expression inside the second radical in \eqref{P4solution}. 
Here
{\small
\[
p+m= m_0+ \fr{2}{3}p=\fr{\be}{4} x^2(x-1)^2 \bigg(\ga- \fr{7\be}{12}\bigg(\bigg(x-\fr{1}{2}\bigg)^2+ \fr{1}{28}\bigg) \bigg)
\]
}
while
\[
q=\fr{\be^3}{2} \bigg(x-\fr{1}{2}\bigg) x^3 (x-1)^3 \bigg(\bigg(x-\fr{1}{2}\bigg)^4+ \fr{7}{32}\bigg(x-\fr{1}{2}\bigg)^2+ \fr{1}{128}\bigg).
\]

We claim that
\begin{enumerate}
\item[(i)] $-(\sqrt{2m}(p+m)-q) \geq 0 \iff x \geq 1$, and
\item[(ii)] $-(\sqrt{2m}(p+m)+q) \geq 0 \iff x \leq 0$.
\end{enumerate}
Indeed, on the basis of the properties of the key polynomials $P_n$ and their derivatives for small values of $n$ (namely, the validity of Conjecture~\ref{singularity_conj}) it suffices to check that the signs of $-(\sqrt{2m}(p+m) \pm q)$ are as claimed {\it locally} near $x=0$ and $x=1$. This may be achieved by computing the values of $\sqrt{2m}(p+m) \pm q$ and its first derivative in $x=0$ and $x=1$. We omit the explicit calculation.

\medskip
Finally, the explicit ``parametrization" \eqref{P4solution} for $\mc{C}(4)$ may be used to check that the real locus $\mc{C}(4)(\mb{R})$ has the following features, and the validity of Conjecture~\ref{numerology_of_key_polynomial_roots} when $n=4$ follows easily.

\begin{enumerate}
\item There are no points in the real locus for which $0<x<1$.
\item When $x<0$, there are two monotone curves $\la_i=\la_i(x)$, $i=1,2$ for which $\la_1>0>\la_2>x$. These curves intersect in their common limit of $(0,0)$, and satisfy
\[
\lim_{x \ra \infty} \la_1= \infty= -\lim_{x \ra \infty} \la_2.
\]
\item When $x>1$, there are two monotone curves $\la_i=\la_i(x)$, $i=3,4$ for which $x>\la_3(x)>\la_4(x)$. These curves intersect in their common limit of $(1,1)$, and satisfy
\[
\lim_{x \ra \infty} \la_3= \infty= -\lim_{x \ra \infty} \la_4.
\]

\end{enumerate}


\subsection{Case: $n=5$} This case is more difficult than the preceding ones. Indeed, as noted in Section~\ref{real_loci}, the monodromy group $G$ of $P_5(x,\la)$ over $\mb{C}(x)$ associated to the projection of $\ov{\mc{C}(5)}$ from $(0,0,1)$ contains an element of order 3. On the other hand, the solvable subgroups of $S_5$ are precisely the subgroups of the Frobenius group $F_{20}$ of order 20. It follows that $G$ is not solvable, and therefore no global solution $\la=\la(x)$ of the plane curve with equation $P_5(x,\la)=0$ is available. On the other hand, {\it local} (Puiseux) parametrizations $\la=\la(x)$ that solve $P_n(x,\la)=0$ are always available (see, e.g. \cite[Sec. 8.3]{BK}), and we will carry this out now when $n=5$ near the singular point $p_1$; in view of \eqref{key_poly_symmetry}, the local descriptions of $\ov{\mc{C}(5)}$ near the singular points $p_1$, $p_2$, and $p_3$ are identical.

\medskip
{\fl \bf Local parametrizations at $(0,0)$.} We have
{\tiny
\begin{equation}\label{P5}
\begin{split}
P_5(x,\la)&= - \fr{45}{32} x^{10} +\fr{75}{16} x^9 - \fr{675}{32} x^8 \la + \fr{75}{16} x^9 \la - 
 15 x^3 \la^2 + \fr{135}{2} x^4 \la^2 - \fr{945}{8} x^5 \la^2 + 
 \fr{1575}{16} x^6 \la^2 + \fr{45}{2} x^2 \la^3 - \fr{195}{2} x^3 \la^3 \\
 &+
     \fr{2565}{16} x^4 \la^3 - \fr{945}{8} x^5 \la^3 - \fr{225}{16} x \la^4 +
 \fr{1935}{32} x^2 \la^4 - \fr{195}{2} x^3 \la^4 + \fr{135}{2} x^4 \la^4 + 
 \fr{105}{32} \la^5 - \fr{225}{16} x \la^5 + \fr{45}{2} x^2 \la^5 - 15 x^3 \la^5.
\end{split}
\end{equation}
}
From \eqref{P5} it follows that the lower hull of the Newton polygon consists of the line segments $\ell_1, \ell_2$ between $(0,5)$ and $(3,2)$ (of slope $-1$); and $(3,2)$ and $(9,0)$ (of slope $-\fr{1}{3}$)
, respectively. So there are at least two local (complex) branches near $(0,0)$, whose respective multiplicities are equal to the absolute values of the slopes of the corresponding segments $\ell_i$.

{\bf \fl Branches associated with $\ell_1$.} Begin by writing
\begin{equation}\label{first_approximation}
\la_1= x(c_1+ \la_1^1)
\end{equation}
where $c_1 \in \mb{C}^{\ast}$ and 
\[
\la_1^1=\la_1^1(x)= c_2x^{\ga_2}+ c_3x^{\ga_2+\ga_3}+ \dots
\]
is a Puiseux series that remains to be determined. Now substitute the first approximation \eqref{first_approximation} into the implicit equation $P_5=0$ and divide by the smallest common power in $x$; the result is a polynomial in $x$, $c_1$, and $\la_1^1$ the sum of whose lowest $x$-order terms is
\[
\fr{15}{32}x^5 c_1^2 (7c_1^3-30c_1^2+48c_1-32)= \fr{15}{32}x^5 c_1^2 (c_1-2)(7c_1^2-16c_1+16).
\]
As the lowest $x$-order terms of $P_5$ must sum to zero, we deduce that either $c_1=2$ or else $c_1$ is one of the two complex conjugate roots of $7c_1^2-16c_1+16$. In the latter case, we obtain two non-real branches, exchanged under conjugation. So say that $c_1=2$. Then upon dividing by $\fr{15}{32}x^5$ we obtain
{\tiny
\begin{equation}\label{second_approximation}
\begin{split}
P_5^1(x,\la_1^1)&= 16 x - 64 x^2 + 104 x^3 - 80 x^4 + 17 x^5 + 48 \la_1 - 192 x \la_1 + 280 x^2 \la_1 - 136 x^3 \la_1 - 45 x^4 \la_1 + 10 x^5 \la_1 + 96 \la_1^2 - 
 408 x \la_1^2 + 648 x^2 \la_1^2 \\
 &- 406 x^3 \la_1^2 + 88 \la_1^3 - 376 x \la_1^3 + 
 598 x^2 \la_1^3 - 380 x^3 \la_1^3 + 40 \la_1^4 - 171 x \la_1^4 + 272 x^2 \la_1^4 - 
 176 x^3 \la_1^4 + 7 \la_1^5 - 30 x \la_1^5 + 48 x^2 \la_1^5 - 32 x^3 \la_1^5.
\end{split}
\end{equation}
}
The lower hull of the Newton polygon of $P_5^1(x,\la_1^1)$ is a single line segment of slope $-1$ linking $(0,1)$ and $(1,0)$. It follows immediately that $\ga_i=1$ for all $i \geq 2$. Substituting $\la_1^1= \sum_{i=1}^{\infty} c_{i+1}x^i$ in \eqref{second_approximation} and setting the result equal to zero, we may compute all of the remaining power series coefficients $c_i$, $i \geq 2$. Clearly the branch in question is real.

{\bf \fl Branches associated with $\ell_2$.} Consider now those branches arising from the line segment $\ell_2$. Much as before, begin by writing
\begin{equation}\label{first_approximation_bis}
\la_1= x^3(c_1+ \la_1^1)
\end{equation}
where $\la_1^1=\la_1^1(x)= c_2x^{\ga_2}+ c_3x^{\ga_2+\ga_3}+ \dots$ is a Puiseux series that remains to be determined. Substituting \eqref{first_approximation_bis} into the implicit equation $P_5=0$ and dividing by the smallest common power in $x$ yields a polynomial in $x$, $c_1$, and $\la_1^1$ whose sum of lowest $x$-order terms is
\[
\fr{15}{32}x^9(10-32c_1^2).
\]
It follows that $c_1=\pm \fr{\sqrt{5}}{4}$. Now say that $c_1=\fr{\sqrt{5}}{4}$. Then dividing by $-\fr{15}{32768}x^9$ we obtain a polynomial 
{\tiny
\[
\begin{split}
P_5^2(x,\la_1^1)&=-43008 x + 80640 x^2 + 7680 \sqrt{5} x^2 - 67200 x^3 + 
 14080 \sqrt{5} x^3 + 3000 x^4 - 27360 \sqrt{5} x^4 - 12900 x^5 
 \\
 &+ 20160 \sqrt{5} x^5 + 20800 x^6 - 175 \sqrt{5} x^6 - 14400 x^7 + 
 750 \sqrt{5} x^7 - 1200 \sqrt{5} x^8 + 800 \sqrt{5} x^9 \\
 &+ 16384 \sqrt{5} \la_1 - 73728 \sqrt{5} x \la_1 + 129024 \sqrt{5} x^2 \la_1
 + 189440 x^3 \la_1 - 107520 \sqrt{5} x^3 \la_1 - 328320 x^4 \la_1 \\
 &+ 9600 \sqrt{5} x^4 \la_1 + 241920 x^5 \la_1 - 41280 \sqrt{5} x^5 \la_1 - 
 3500 x^6 \la_1 + 66560 \sqrt{5} x^6 \la_1 + 15000 x^7 \la_1 \\
 &- 46080 \sqrt{5} x^7 \la_1 - 24000 x^8 \la_1 + 16000 x^9 \la_1 + 32768 \la_1^2 - 
 147456 x \la_1^2 + 258048 x^2 \la_1^2 - 36864 \sqrt{5} x^2 \la_1^2 \\
 &-215040 x^3 \la_1^2 + 159744 \sqrt{5} x^3 \la_1^2 + 57600 x^4 \la_1^2 - 
 262656 \sqrt{5} x^4 \la_1^2 - 247680 x^5 \la_1^2 \\
 &+193536 \sqrt{5} x^5 \la_1^2 + 399360 x^6 \la_1^2 - 5600 \sqrt{5} x^6 \la_1^2 - 
 276480 x^7 \la_1^2 + 24000 \sqrt{5} x^7 \la_1^2 - 38400 \sqrt{5} x^8 \la_1^2 \\
 &+ 25600 \sqrt{5} x^9 \la_1^2 - 49152 x^2 \la_1^3 + 212992 x^3 \la_1^3 - 
 350208 x^4 \la_1^3 + 30720 \sqrt{5} x^4 \la_1^3 + 258048 x^5 \la_1^3 \\
 &-132096 \sqrt{5} x^5 \la_1^3 - 22400 x^6 \la_1^3 + 212992 \sqrt{5} x^6 \la_1^3 + 
 96000 x^7 \la_1^3 - 147456 \sqrt{5} x^7 \la_1^3 - 153600 x^8 \la_1^3 \\
 &+ 102400 x^9 \la_1^3 + 30720 x^4 \la_1^4 - 132096 x^5 \la_1^4 + 
 212992 x^6 \la_1^4 - 8960 \sqrt{5} x^6 \la_1^4 - 147456 x^7 \la_1^4 \\
 &+ 38400 \sqrt{5} x^7 \la_1^4 - 61440 \sqrt{5} x^8 \la_1^4
 + 40960 \sqrt{5} x^9 \la_1^4 - 7168 x^6 \la_1^5 + 30720 x^7 \la_1^5 - 
 49152 x^8 \la_1^5 + 32768 x^9 \la_1^5
\end{split}
\]
}
whose Newton polygon's lower hull is a single line segment of slope $-1$ linking $(0,1)$ and $(1,0)$. It follows just as before that $\ga_i=1$ for all $i \geq 2$, and that all remaining power series coefficients $c_i$, $i \geq 2$ may be computed by substituting $\la_1^1= \sum_{i=1}^{\infty} c_{i+1}x^i$ in $P_5^2(x,\la_1^1)=0$. The corresponding branch is real.

\medskip
Finally, when $c_1=-\fr{\sqrt{5}}{4}$ we obtain a polynomial $P_5^3(x,\la_1^1)= P_5^2(x,-\la_1^1)$. The corresponding branch is clearly real as well. 

\medskip
To conclude, it now suffices to show that $\mc{C}(5)(\mb{R})$ has the same characteristic features as in the $n=3$ case; these collectively determine a {\it topological profile} for the real locus, and we conjecture that the following is true.

\begin{conj}\label{topological_profile_of_real_locus}
Let $n \geq 2$ be a positive integer. Then $\mc{C}(n)(\mb{R})$ and $\mc{C}(n \mod 2 + 2)(\mb{R})$ have the same topological profile.
\end{conj}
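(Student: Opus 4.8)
The plan is to deduce the conjecture from an explicit description of the germ of $\ov{\mc{C}(n)}$ at $p_1=(0,0)$ — which by Lemma~\ref{key_poly_symmetry} is real-analytically isomorphic to the germ at $p_2$ and at $p_3$ — together with Conjecture~\ref{singularity_conj} and control on the real zeros of the discriminants of $P_n$. Write $P_n=\sum_{b=0}^{n}q_{b,n}(x)\la^{b}$. The first step is to determine the Newton polygon of $P_n$ at the origin. Specialising the recursion~\eqref{P_n_recursion} coefficientwise in $\la$, one proves by induction that $\mathrm{ord}_x(q_{b,n})=n-b$ for $2\le b\le n$, with leading coefficient $\ga_{b,n}$ of sign $(-1)^{b-1}$ (via the auxiliary recursion $\ga_{b,n+1}=(n+b-1)\ga_{b,n}+(\tfrac32-b)\ga_{b-1,n}$, with $\ga_{1,n}$ set to $0$); that $q_{0,n}=P_n(x,0)=c_{n-1}\,x^{2n-1}\bigl(\tfrac32 x-n\bigr)$ with $c_m=\prod_{j=0}^{m-1}\bigl(\tfrac12-j\bigr)$ — this is exactly the key polynomial of the degenerate curve $y^{2}=x^{2}(x-1)$, obtained in closed form by differentiating $x(x-1)^{1/2}$; and that $q_{1,n}=\tfrac{(2n-1)n\,c_{n-1}}{2}\,x^{2n-2}+(\ast)\,x^{2n-1}$. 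It follows that the lower Newton hull of $P_n$ at $p_1$ has exactly two edges, $E_1$ from $(0,n)$ to $(n-2,2)$ of slope $-1$ and $E_2$ from $(n-2,2)$ to $(2n-1,0)$ of slope $-\tfrac{2}{n+1}$; in particular $P_n$ has multiplicity exactly $n$ at $p_1$.

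The second step computes the two edge polynomials and their real zero lines. The $E_2$-polynomial is the binomial $\al_n\,x^{n-2}\la^{2}+\be_n\,x^{2n-1}$ with $\al_n=-n!/8$ (from $\al_{n+1}=(n+1)\al_n$) and $\be_n=-n\,c_{n-1}$; since $\mathrm{sign}(c_m)=(-1)^{m-1}$, one gets $\mathrm{sign}(-\be_n/\al_n)=(-1)^{n+1}$. Thus the two $E_2$-branches $\la\approx\pm\sqrt{-\be_n/\al_n}\,x^{(n+1)/2}$ have a real leading coefficient — two smooth real arcs through $p_1$ tangent to $\la=0$, one on each side $x\gtrless0$ — when $n$ is odd, and arise from a ramified branch with imaginary leading coefficient — two real arcs on the side $x<0$ only — when $n$ is even. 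The $E_1$-polynomial is $\la^{2}G_n$, $G_n$ the reduced tangent cone; the rescaling $x=\ep X,\ \la=\ep\Lambda$ degenerates $f$ to the conic $\tilde f=\Lambda X-X^{2}$ and identifies the tangent cone $\la^2G_n$ with $\tilde f^{\,n-1/2}\bigl(\tilde f^{1/2}\bigr)^{(n)}$ (derivatives in $X$); writing $\tilde f=a^{2}-s^{2}$ with $s=X-\tfrac{\Lambda}{2},\ a=\tfrac{\Lambda}{2}$ and $\bigl(\tilde f^{1/2}\bigr)^{(n)}=\tilde f^{1/2-n}\pi_n$, one finds $\pi_n=a^{n}\phi_n(s/a)$ with $\phi_{n+1}=(2n-1)\sigma\phi_n+(1-\sigma^{2})\phi_n'$, $\phi_0=1$. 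An easy induction gives that $\phi_n$ has the parity of $n$ and that its even part — $\phi_n/\sigma$ if $n$ is odd, $\phi_n$ if $n$ is even — has all strictly negative coefficients (both transition maps preserve this). Hence $G_n$ has exactly one real zero line, $\la=2x$, when $n$ is odd, and none when $n$ is even. Combining: the real germ of $\ov{\mc{C}(n)}$ at $p_1$ (hence at $p_2,p_3$) is three smooth real arcs (one transverse to $\la=2x$, two tangent to $\la=0$) when $n$ is odd, and two real arcs lying on a single side when $n$ is even.

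The third step globalizes. By Conjecture~\ref{singularity_conj}, $\mc{C}(n)(\mb{R})\smallsetminus\{p_1,p_2,p_3\}$ is a smooth $1$-manifold. The leading $\la$-coefficient $a_n(x)$ of $P_n$ is, by a further induction on~\eqref{P_n_recursion}, sign-definite in $x-\tfrac12$ for $n$ even and $(x-\tfrac12)$ times such a polynomial for $n$ odd, so its only real zero is $x=\tfrac12$, and only for $n$ odd. Granting that the $\la$-discriminant of $P_n$ likewise has no real zero outside $\{0,\tfrac12,1\}$ — a statement in the spirit of Conjecture~\ref{separability_conjecture} — the count of real $\la$-roots of $P_n(x,\cdot)$ is constant on each of $(-\infty,0),(0,\tfrac12),(\tfrac12,1),(1,\infty)$, there are no vertical-tangent folds and no compact ovals, and since every finite root tends to $0$ (resp.\ $1$, resp.\ $\infty$) as $x\to0$ (resp.\ $1$, resp.\ $\pm\infty$), these counts equal the numbers of real arc-ends at the adjacent special points found in step two. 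With Lemma~\ref{key_poly_symmetry} (the involution $[x:\la:z]\mapsto[x:z:\la]$ carries the picture at $p_2$ to that at $p_1$; $(x,\la)\mapsto(1-x,1-\la)$ carries $p_3,(1,\infty),(\tfrac12,1)$ to $p_1,(-\infty,0),(0,\tfrac12)$), and with the identities $P_n(x,x)=c_n\,x^{n}(x-1)^{n}$ and $P_n(x,0)=c_{n-1}\,x^{2n-1}(\tfrac32 x-n)$ — which show a branch can cross $\la=x$ only over $x\in\{0,1\}$ and $\la=0$ only over $x\in\{0,\tfrac{2n}{3}\}$, fixing the monotonicity and the vertical order — the real locus is pinned down completely: it is the union of monotone graphs realising the combinatorics of the $n=3$ analysis when $n$ is odd, and of the $n=2$ analysis (which agrees with the $n=4$ one) when $n$ is even. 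That is, $\mc{C}(n)(\mb{R})$ and $\mc{C}(n\bmod2+2)(\mb{R})$ have the same topological profile.

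The expected main obstacle is the third step: ascending from the now-explicit local pictures to the global profile leans on Conjecture~\ref{singularity_conj} and on the claim that the $\la$-discriminant of $P_n$ has no real zeros outside $\{0,\tfrac12,1\}$, which is not among the stated conjectures — although it should either follow from Conjecture~\ref{numerology_of_key_polynomial_roots} by a branch count or yield to the same inductive analysis as the other discriminant statements. Steps one and two, by contrast, are self-contained (if intricate) inductions on~\eqref{P_n_recursion}, their crucial ingredients being the closed form for $P_n(x,0)$ (the key polynomial of the nodal cubic), the identity $\al_n=-n!/8$, and the coefficient-negativity of the even part of $\phi_n$.
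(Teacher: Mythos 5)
First, be aware that the statement you set out to prove is left as a conjecture in the paper: the authors verify it only for $n\le 5$ (via explicit solution formulas in $\la$ for $n\le 4$ and a Newton--Puiseux analysis at $p_1$ for $n=5$), and otherwise offer computer-generated plots as evidence, so there is no complete argument in the paper to compare against. Your steps one and two are, as far as I can check against the paper's data, correct and genuinely go beyond the paper: the claimed lower Newton hull at $p_1$ (edges from $(0,n)$ to $(n-2,2)$ of slope $-1$ and from $(n-2,2)$ to $(2n-1,0)$ of slope $-2/(n+1)$), the closed form $P_n(x,0)=c_{n-1}x^{2n-1}(\fr{3}{2}x-n)$, the value $\al_n=-n!/8$, and the tangent-cone computation via the degeneration to $\tilde{f}=\La X-X^{2}$ all agree with the explicit polynomials $P_2,\dots,P_5$ appearing in the paper, and they reproduce exactly the local pictures found there (three real arcs through $p_1$ for $n=3,5$; two real arcs on the side $x<0$ only for $n=2,4$). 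This uniform local analysis at the three singular points, valid for every $n$, is the most valuable part of your proposal and is worth writing up carefully.

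The genuine gap is in step three. Your globalization is conditional on Conjecture~\ref{singularity_conj} and on the assertion that the $\la$-discriminant of $P_n$ has no real zeros outside $\{0,\fr{1}{2},1\}$. Contrary to your remark, the latter essentially \emph{is} one of the stated conjectures, namely Conjecture~\ref{lambda_discriminant_roots}; but the paper deduces that conjecture \emph{from} Conjecture~\ref{topological_profile_of_real_locus} together with Conjecture~\ref{singularity_conj}, so invoking it here is circular unless you supply an independent proof, and none is known beyond computation for small $n$. Even granting both inputs, the accounting of arc-ends is incomplete at infinity: the top-degree form of $P_n$ is $x^{2n-1}(\fr{3}{2}c_{n-1}x+(\ast)\la)$, so $\ov{\mc{C}(n)}$ meets the line $z=0$ not only at $p_2$ but also at a further real point $[1:c:0]$, whose branch ($\la\sim cx$ as $x\to\pm\infty$) must be separated from the branches through $p_2$ (which, for $n$ odd, include the vertical asymptote at $x=\fr{1}{2}$) before the root counts on the unbounded intervals are pinned down; and the constancy of the root count across $x=\fr{1}{2}$ requires knowing that the leading $\la$-coefficient vanishes there to first order only. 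In short: steps one and two constitute a real advance over the paper, but the proposal as a whole is a conditional reduction to two open conjectures --- one of which the paper itself derives from the very statement you are trying to prove --- rather than a proof.
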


Clearly Conjecture~\ref{topological_profile_of_real_locus} implies Conjecture~\ref{numerology_of_key_polynomial_roots}. For the sake of completeness, we also record the following conjecture, which is related to Conjectures~\ref{topological_profile_of_real_locus} and \ref{singularity_conj}.

\begin{conj}\label{lambda_discriminant_roots}
For all $n \geq 2$, the set of real roots of the discriminant $\De_n=\De_n(x)$ of $P_n=P_n(x,\la)$ with respect to $\la$ is $\{0,1\}$.
\end{conj}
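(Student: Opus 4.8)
The plan is to translate the vanishing of $\De_n(x)=\operatorname{disc}_\la P_n$ into information about the fibre of the $x$-projection of $\ov{\mc{C}(n)}$, dispose cheaply of the contribution at $x=0,1$ and of all ``degree drops'', reduce the rest to a positivity statement about a single factor of $\De_n$, and then attack that positivity. Recall that $\De_n(x_0)=0$, for $x_0\in\mb{C}$, exactly when either (a) $[\la^n]P_n$ and $[\la^{n-1}]P_n$ both vanish at $x_0$ (so that $\ge 2$ of the $\la$-roots of $P_n$ escape to infinity as $x\to x_0$), or (b) the polynomial $P_n(x_0,\la)$, with any vanishing leading coefficients removed, has a repeated root in $\mb{C}$.

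I would first settle $x\in\{0,1\}$ and case (a). Running the recursion \eqref{P_n_recursion} with $f=x(x-1)(x-\la)$ and the seed \eqref{seed_datum}, the identities $f(0)=f(1)=0$ give, by a one-line induction,
\[
P_n(0,\la)=c_n\la^n,\qquad P_n(1,\la)=(-1)^n c_n(\la-1)^n,\qquad c_n=\tfrac12\prod_{k=1}^{n-1}\bigl(\tfrac12-k\bigr)\neq 0,
\]
so $x=0$ and $x=1$ are zeros of $\De_n$, of the large multiplicity $e_n$ read off from the ramification of the $x$-projection over $0,1$ together with the delta-invariant of $\ov{\mc{C}(n)}$ at $p_1,p_3$ (one expects $e_n=n^2-1$, as holds for $n\le 4$; the exact value will not be needed). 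For (a), put $x=\tfrac12+t$; the recursion of Lemma~\ref{key_poly_degree} for the leading coefficient $a_n:=[\la^n]P_n$ becomes $a_{n+1}=-(t^2-\tfrac14)\,\dot a_n+(2n-1)\,t\,a_n$ with $a_1=-t$, and an induction on this relation shows that $a_n(t)$ is $-t^{\,n\bmod 2}$ times a polynomial in $t^2$ all of whose coefficients have the same sign, the leading one nonzero. Hence the only possible real zero of $a_n$ is $x=\tfrac12$, occurring exactly when $n$ is odd; and in that case $a_{n-1}$, being of the opposite parity, is sign-definite, so $a_{n-1}(\tfrac12)\neq 0$. Thus (a) happens at no real $x_0\notin\{0,1\}$, and it remains to rule out (b) there.

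Now Lemma~\ref{key_poly_symmetry} organizes what is left: from $P_n(1-x,1-\la)=P_n(x,\la)$ and $a_n(1-x)=(-1)^n a_n(x)$ one gets $\De_n(1-x)=\De_n(x)$, so $\De_n(x)=D_n(s)$ with $s=(x-\tfrac12)^2$, and the conjecture is equivalent to: writing $D_n(s)=c_n(s-\tfrac14)^{e_n}h_n(s)$, the factor $h_n$ has no root in $[0,\infty)$. Case (b) at a real $x_0\notin\{0,1\}$ splits in two. If $P_n(x_0,\la)$ has a repeated \emph{real} root $\la_0$, then $(x_0,\la_0)$ is a real point of $\ov{\mc{C}(n)}$ with $\partial_\la P_n=0$; by Conjecture~\ref{singularity_conj} it is a smooth point, so $\partial_x P_n\neq 0$ there and the real branch through it has a vertical tangent over $x_0$, contradicting the $x$-monotonicity of the real branches of $\mc{C}(n)$ away from $x=0,1$ --- which is the ``topological profile'' of Conjecture~\ref{topological_profile_of_real_locus}, confirmed for $n\le 5$ by the Puiseux and symmetry analysis of Section~\ref{real_loci}. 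So everything comes down to the remaining subcase: $P_n(x_0,\la)$ has a repeated \emph{non}-real root, hence a conjugate pair of them, i.e.\ two complex-conjugate critical points of the $x$-projection of $\mc{C}(n)$ lie over a single real value $x_0\notin\{0,1\}$, i.e.\ $h_n$ has a root in $[0,\infty)$.

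This last subcase is the main obstacle, and it is not accessible through the recursion alone (one checks that a repeated root of $P_n$ is not inherited by $P_{n+1}$, so there is no direct induction on $\De_n$). I see two plausible routes. The \emph{explicit} one: push the Cardano/Ferrari computations of Section~\ref{real_loci} past $n=4$ and isolate $h_n$ --- it is a nonzero constant for $n=2,3$, and for $n=4$ unwinding Ferrari's resolvent via \eqref{U1squaredminusU2} gives $\De_4=c_4\,(x(x-1))^{15}\bigl((x-\tfrac12)^2+\tfrac{5}{108}\bigr)$, so $h_4(s)=s+\tfrac5{108}>0$ on $[0,\infty)$ --- and then prove in families that $h_n$ has, say, all coefficients of one sign (equivalently, no nonnegative real root); since $P_5$ is not solvable, $\De_5$ must be computed by resultants rather than radicals, which obscures the pattern but does not block it. The \emph{geometric} one: assuming Conjecture~\ref{singularity_conj} and the (surely valid, and for $n=5$ already implicit in the non-solvability argument) irreducibility of $P_n$ over $\mb{C}(x)$, compute the genus of the normalization as $(2n-1)(n-1)-3\delta$ from the genus--degree formula and the common delta-invariant $\delta$ at $p_1,p_2,p_3$, match this against Riemann--Hurwitz for the degree-$n$ map $x\colon\widetilde{\mc{C}(n)}\ra\mb{P}^1$ (projection from $p_2=[0:1:0]$), and then argue that the critical values of $x$ contributing to $h_n$ --- those outside $\{0,1,\infty\}$ --- are all non-real, for instance by tracking them through a monodromy or deformation argument along the real Legendre family \eqref{real_legendre_family}. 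I would try the geometric route first, but I expect that pinning down the \emph{reality} of these extra branch points, rather than merely counting them, is exactly the step that calls for a genuinely new idea.
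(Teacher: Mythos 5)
Your proposal is not a complete proof, and you say so yourself; but it is worth recording precisely where it stands relative to the paper. The paper does not prove Conjecture~\ref{lambda_discriminant_roots} either: its entire ``proof'' is the one-sentence assertion that the statement follows from the $x$-monotonicity of the real branches (Conjecture~\ref{topological_profile_of_real_locus}) together with the characterization of the singular points (Conjecture~\ref{singularity_conj}), plus the verification for $n=5$. Your subcase (b)-with-a-real-repeated-root reproduces exactly that reduction: a real repeated $\la$-root at a smooth real point forces $\partial_\la P_n=0\neq\partial_x P_n$, hence a vertical tangent, contradicting monotonicity. Your computations of $P_n(0,\la)=c_n\la^n$ and $P_n(1,\la)=(-1)^nc_n(\la-1)^n$, and your treatment of the degree-drop case via the recursion $a_{n+1}=-(t^2-\fr{1}{4})\dot a_n+(2n-1)ta_n$, are correct as far as they go (the sign-pattern claim for $a_n$ is asserted rather than proved, but it checks out for $n\le 4$ and is a side issue). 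Your identification of $\De_4$ up to a constant as $(x(x-1))^{15}((x-\fr{1}{2})^2+\fr{5}{108})$ via the resolvent cubic is a genuine verification of the conjecture for $n=4$ that the paper does not spell out.

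The genuine gap --- which you name, but do not close --- is the subcase in which $P_n(x_0,\la)$ acquires a \emph{pair of complex-conjugate} repeated roots over a real $x_0\notin\{0,1\}$. This makes $\De_n(x_0)=0$ while producing no real point of $\mc{C}(n)$ at all, so neither Conjecture~\ref{singularity_conj} (which only guarantees such a point is smooth, not unramified over $x$) nor the real monotonicity of Conjecture~\ref{topological_profile_of_real_locus} says anything about it. This means the paper's claimed implication is itself incomplete: even granting Conjectures~\ref{singularity_conj} and~\ref{topological_profile_of_real_locus}, one still has to show that the factor you call $h_n$ has no root on $[0,\infty)$, and neither your explicit route (coefficient positivity of $h_n$ in general) nor your geometric route (reality of the extra critical values of the $x$-projection) is carried out. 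So the proposal should be read as: a correct and more careful reduction than the paper's, a verification for $n\le 4$, and an honest isolation of the one step that remains open --- a step the paper silently elides. If the intent were to match the paper's level of rigor, your subcase (b)-real alone suffices; if the intent is an actual proof, the non-real repeated-root case is still missing.
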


Figure 1 includes computer-generated sketches of the real loci of $\mc{C}(n)$ for some even values of $n$; the graphs give compelling evidence for Conjectures~\ref{topological_profile_of_real_locus} and \ref{numerology_of_key_polynomial_roots}.

\begin{figure}[!htb]
 \includegraphics{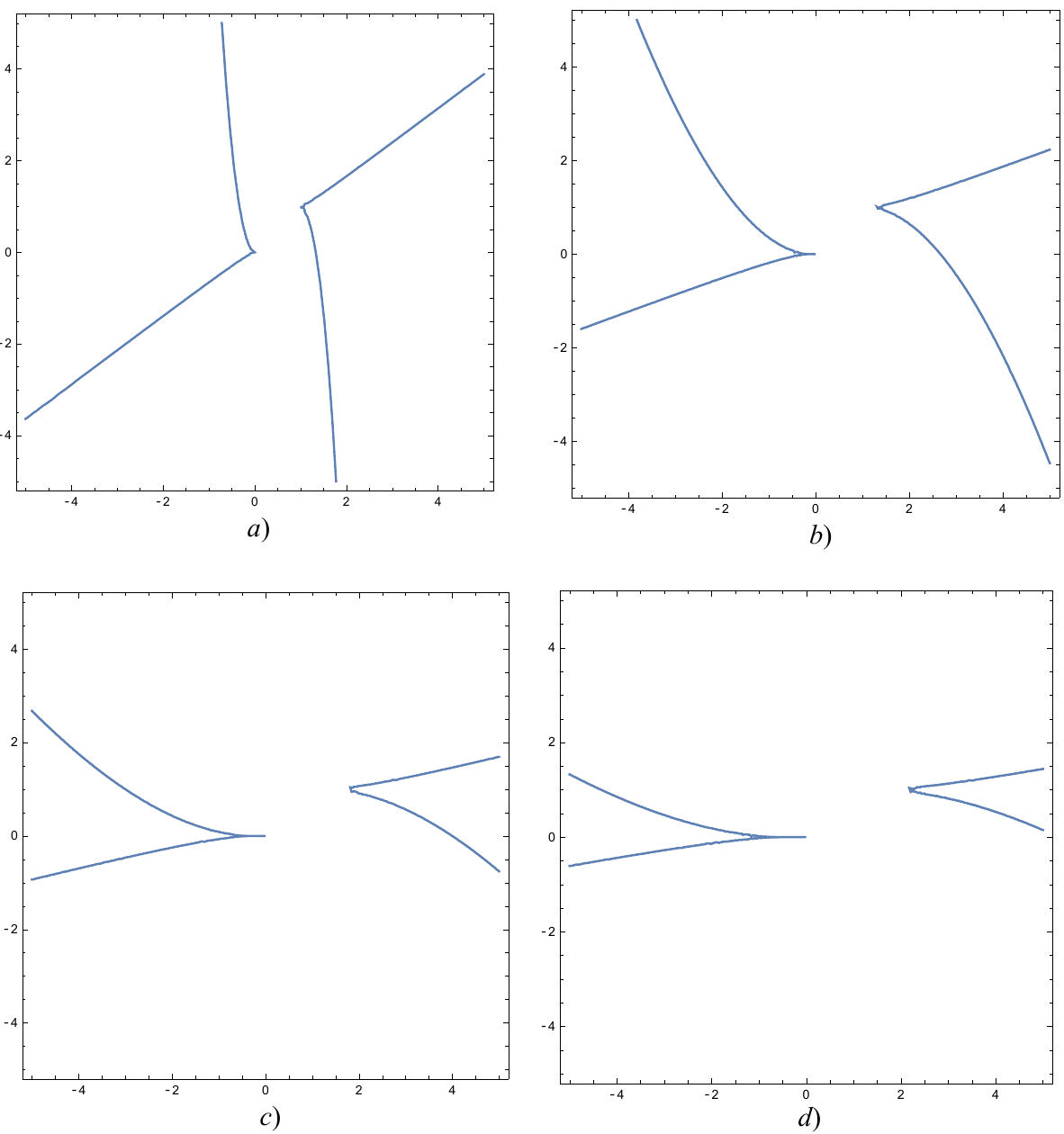}
 \caption{Real loci of a) $P_2=0$, b) $P_4=0$, c) $P_6=0$, and d) $P_8=0$.}
 \end{figure}

The affirmation of Conjecture~\ref{lambda_discriminant_roots} is a consequence by the monotonicity property of Conjecture~\ref{topological_profile_of_real_locus}, together with the conjectural characterization \ref{singularity_conj} of the real singular points of $\ov{\mc{C}(n)}$. In particular, Conjecture~\ref{lambda_discriminant_roots} holds when $n=5$.

\medskip
It remains to verify Conjecture~\ref{topological_profile_of_real_locus} when $n=5$. To this end, first note that implicitly differentiating the equation $P_n(x,\la(x))=0$ with respect to $x$ shows that
\[
\frac{d\la}{dx}=0 \iff \frac{dP_n}{dx}=0
\]
away from the singular points of $\mc{C}(n)$. On the other hand, for small values of $n$ (including $n=5$), the equations $P_n= \frac{dP_n}{dx}=0$ have no common affine solutions $(x,y)$ in $\mb{R}^2 \setminus \{(0,0),(1,1)\}$. In light of the Puiseux parametrizations of the real branches near $(0,0)$ and the symmetries \eqref{key_poly_symmetry}, the desired {\it $x$-monotonicity} properties of the solution curves $\la=\la(x)$ follow immediately. It suffices now to check that $P_5(x,\la)=0$ has exactly three solutions $\la$ for each fixed value of $x$ in $\mb{R} \setminus \{0,1,\fr{1}{2}\}$, i.e. that $\mc{C}(5)(\mb{R})$ is connected over each of the critical intervals $(-\infty,0)$, $(0,1)$, and $(1,\infty)$ in $x$. But this, in turn, is an immediate consequence of the Implicit Function theorem. 

\medskip
Finally, it remains to verify the inequalities of the real solution curves $\la=\la(x)$ described in the topological profile of $\mc{C}(3)$ above relative to $x$. These hold locally near the singular points $(0,0)$ and $(1,1)$ of $\mc{C}(5)$ in the affine plane, so it suffices simply to show that the intersection of the line $y=x$ with $\mc{C}(5)(\mb{R})$ is precisely supported along $\{(0,0),(1,1)\}$. And indeed, we have
\[
P_5(x,x)=-\fr{105}{32} x^5 + \fr{525}{32} x^6 - \fr{525}{16} x^7 + \fr{525}{16} x^8 - \fr{525}{32} x^9 + \fr{105}{32} x^{10}= -\fr{105}{32}x^5(x-1)^5.
\]
\qed
\par
\medskip
Figure 2 includes sketches of the the real loci of $\mc{C}(n)$ for small odd values of $n$; just as in Figure 1, the validity of Conjectures~\ref{topological_profile_of_real_locus} and \ref{numerology_of_key_polynomial_roots} in these cases is graphically apparent.
 
 \begin{figure}[!htb]
 \includegraphics{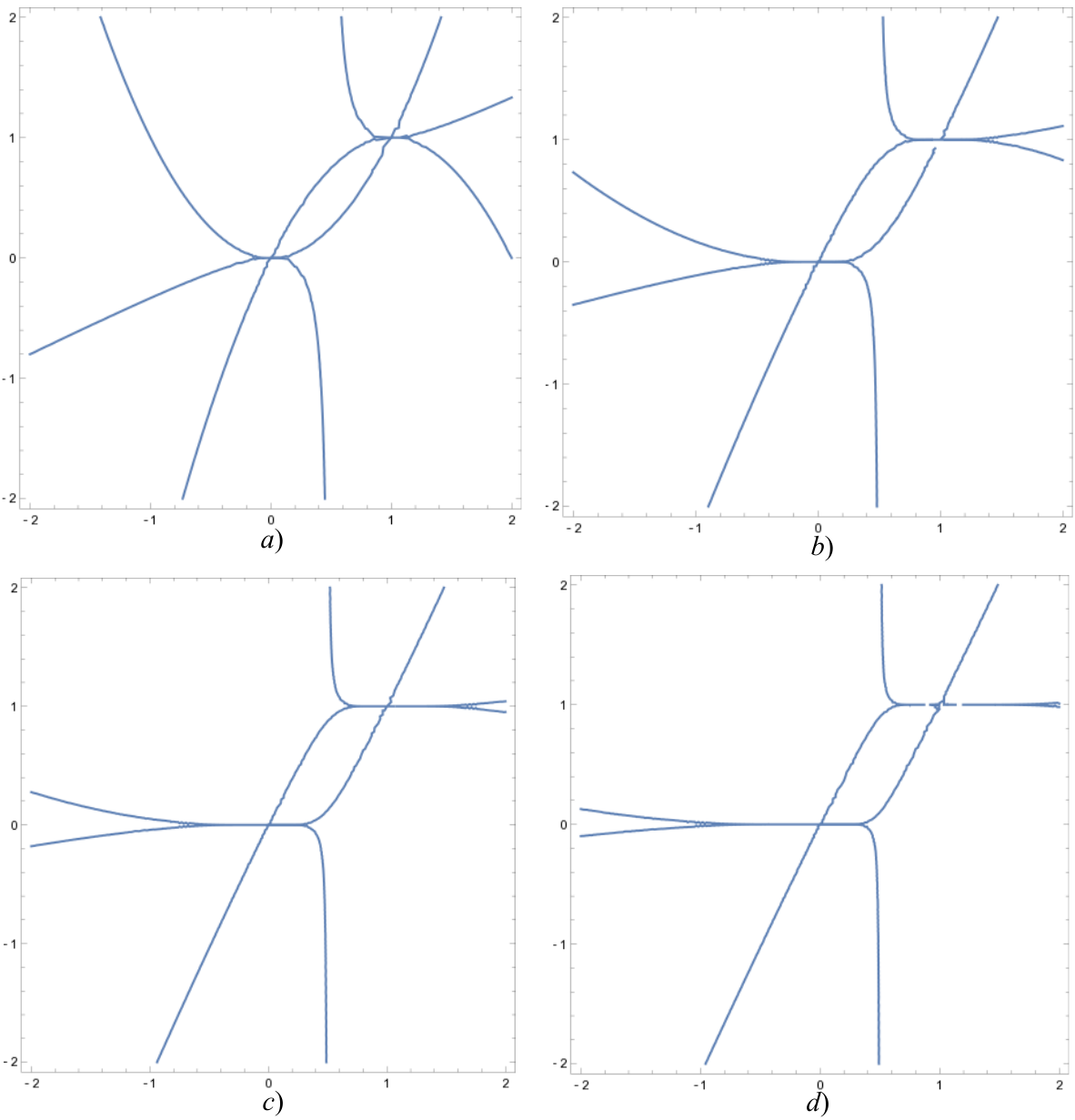}
 \caption{Real loci of a) $P_3=0$, b) $P_5=0$, c) $P_7=0$, and d) $P_9=0$.}
 \end{figure}

\section{From incomplete series on an elliptic curve to complete series on a hyperelliptic curve}
 
 Conjectures~\ref{topological_profile_of_real_locus} and \ref{numerology_of_key_polynomial_roots}, assuming they hold, may be leveraged to construct complete real linear series on real hyperelliptic curves of genus $g \geq 1$ with many real inflection points.

\begin{thm}
Assume that Conjecture~\ref{numerology_of_key_polynomial_roots} holds. There exists a maximally-real hyperelliptic curve $X$ with affine equation $y^2=f$ admits a complete real linear series $|\mc{L}_\mathbb{R}|=|\mc{L}_\mathbb{R}(kD)|$ with the distinguished basis $\mc{F}_g$ as in \eqref{distinguished_basis} whose total real inflectionary degree is
\[
\om_{\mb{R}}(k,g)= 2g(g+1)+2(k-g)(g-1)+ 2g(1+ g \hspace{-7pt}\mod 2)(k-g).
\]
Here $f=f(x)$ is a polynomial of degree $2g-1$ ramified over $\infty \in \mb{P}^1$, and $D$ is the divisor represented by the pullback of $\infty$ to $X$.
\end{thm}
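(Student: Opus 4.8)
\medskip

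The plan is to obtain $X$ as a real smoothing of the metrized complex $\mathfrak{X}$ of curves with $g$ maximally-real elliptic models $E_1,\dots,E_g$ constructed in \cite{BCG}, and then to read off the real inflectionary degree of $|\mc{L}_{\mathbb{R}}(kD)|$ from the limit of its inflectionary divisor on $\mathfrak{X}$, splitting that divisor into the part carried by the ramification locus $R_\pi$ of $\pi\colon X\to\mb{P}^1$ and the part supported away from $R_\pi$ on the interiors of the elliptic components.

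First I would fix the data: take $f\in\mb{R}[x]$ separable of degree $2g+1$ with all roots real, so that $X\colon y^2=f$ is maximally real (and of genus $g$), and choose $f$ within the one-parameter degeneration of \cite{BCG} whose central fibre is $\mathfrak{X}$ and for which $|\mc{L}_{\mathbb{R}}(kD)|$ specializes to a limit linear series whose aspect on each $E_i$ is exactly the codimension-$(g-1)$ subseries spanned by $\mc{F}_g$. Here $\mc{F}_g$ is indeed a basis of the \emph{complete} series $|kD|=|2kP_\infty|$ on $X$: Riemann--Roch gives $h^0(X,kD)=2k-g+1=\#\mc{F}_g$ (as $2k>2g-2$), and the functions in \eqref{distinguished_basis} have pairwise distinct pole orders $0,2,\dots,2k$ and $2g+1,2g+3,\dots,2k-1$ at the Weierstrass point $P_\infty$ over $\infty$, hence are linearly independent.

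Next I would compute the limiting inflectionary divisor on $\mathfrak{X}$ in two parts. The part carried by $R_\pi$ — that is, by the limits of the $2g+2$ real Weierstrass points of $X$ — has total real multiplicity $2g(g+1)+2(k-g)(g-1)$; this is the sum of the pointwise contributions computed explicitly in \cite{BCG}, and it equals $4\binom{g+1}{2}+2(k-g)(g-1)$, the $\mu=k-g$ generalization of the identity for $I_{R_\pi}$ obtained via the Plücker formula in the proof of Lemma~\ref{key_poly_degree}. The part away from $R_\pi$ lives, component by component, in the chart $U_y$ of each $E_i$, where it is cut out by the generalized key polynomial $P_{\mu,k+1}$ with $\mu=k-g$; each real root $\gamma$ of $P_{\mu,k+1}$ with $f(\gamma)>0$ lifts to the two real inflection points $(\gamma,\pm\sqrt{f(\gamma)})$ of the aspect on $E_i$. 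Since here $n=k+1$, we have $n-\mu=(k+1)-(k-g)=g+1$, so Conjecture~\ref{numerology_of_key_polynomial_roots} — in the equivalent form recorded just after its statement — gives exactly $2(k-g)$ real inflection points away from $R_\pi$ on $E_i$ when $g$ is even and $4(k-g)$ when $g$ is odd, i.e.\ $2(k-g)(1+(g\bmod 2))$ per component, hence $2g(k-g)(1+(g\bmod 2))$ altogether. Adding the two parts and invoking conservation of real inflection under the degeneration $X\rightsquigarrow\mathfrak{X}$ yields the asserted value of $\om_{\mathbb{R}}(k,g)$.

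The hard part will be this last step, namely making conservation of the \emph{real} inflectionary degree precise. Unlike the total complex degree, the real one is not automatically preserved under degeneration: a complex-conjugate pair of inflection points of $X$ could collide at a real point of $\mathfrak{X}$, or real inflection could drift into the edges of the metrized complex. Away from $R_\pi$ this should follow from the (conjectural) simplicity of the real inflection in the maximally-real regime — equivalently, from the transversality of the real branches of $\mc{C}(k,g)$ to the region $\{f>0\}$, which makes the count deformation-invariant; along $R_\pi$ it requires the explicit local normal forms of \cite{BCG}; and one must also check that the edge contributions — which account for the discrepancy between the total Wronskian degree on $X$ and the sum of the component Wronskian degrees on the $E_i$ — are purely tropical and register no algebraic real inflection. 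Upgrading the numerical coincidences verified above in low degree into such continuity statements is the crux of the proof.
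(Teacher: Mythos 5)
Your proposal is correct and follows essentially the same route as the paper: the paper's entire proof is the citation ``Follows immediately from Theorems 5.8 and 6.1 of \cite{BCG}'', and what you have written is precisely an unpacking of that citation --- the degeneration to the metrized complex, the $R_\pi$ contribution $2g(g+1)+2(k-g)(g-1)$ from \cite{BCG}, and the away-from-$R_\pi$ count $2g(1+(g\bmod 2))(k-g)$ from Conjecture~\ref{numerology_of_key_polynomial_roots} applied with $n-\mu=g+1$. The ``conservation of real inflection'' step you rightly flag as the crux is exactly the content the paper delegates to the cited theorems of \cite{BCG}, so your account is, if anything, more explicit than the paper's.
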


\begin{proof}
Follows immediately from Theorems 5.8 and 6.1 of \cite{BCG}.
\end{proof}

\end{document}